\newcommand{\coloneq}{:=}
\tikzset{ shorten <>/.style={ shorten >=#1, shorten <=#1 } }
\newcommand{\logLogSlopeTriangle}[5]
{
    \pgfplotsextra
    {
        \pgfkeysgetvalue{/pgfplots/xmin}{\xmin}
        \pgfkeysgetvalue{/pgfplots/xmax}{\xmax}
        \pgfkeysgetvalue{/pgfplots/ymin}{\ymin}
        \pgfkeysgetvalue{/pgfplots/ymax}{\ymax}

        \pgfmathsetmacro{\xArel}{#1}
        \pgfmathsetmacro{\yArel}{#3}
        \pgfmathsetmacro{\xBrel}{#1-#2}
        \pgfmathsetmacro{\yBrel}{\yArel}
        \pgfmathsetmacro{\xCrel}{\xArel}

        \pgfmathsetmacro{\lnxB}{\xmin*(1-(#1-#2))+\xmax*(#1-#2)} 
        \pgfmathsetmacro{\lnxA}{\xmin*(1-#1)+\xmax*#1} 
        \pgfmathsetmacro{\lnyA}{\ymin*(1-#3)+\ymax*#3} 
        \pgfmathsetmacro{\lnyC}{\lnyA+#4*(\lnxA-\lnxB)}
        \pgfmathsetmacro{\yCrel}{\lnyC-\ymin)/(\ymax-\ymin)}

        \coordinate (A) at (rel axis cs:\xArel,\yArel);
        \coordinate (B) at (rel axis cs:\xBrel,\yBrel);
        \coordinate (C) at (rel axis cs:\xCrel,\yCrel);

        \draw[#5]   (A)-- node[pos=0.5,anchor=north] {\scriptsize{1}}
                    (B)-- 
                    (C)-- node[pos=0.,anchor=west] {\scriptsize{#4}} 
                    cycle;
    }
}
\renewcommand{\vec}[1]{\boldsymbol{#1}}    
\newcommand{\tens}[1]{\boldsymbol{#1}}
\newcommand{\GRAD}{\vec{\nabla}}
\newcommand{\DIV}{\vec{\nabla}{\cdot}}
\newcommand{\Id}[1][d]{I_{\!#1}}
\newcommand{\norm}[2][]{\|#2\|_{#1}}
\newcommand{\seminorm}[2][]{|#2|_{#1}}
\newcommand{\vvvert}{\vert\kern-0.25ex\vert\kern-0.25ex\vert}
\newcommand{\normml}[2][]{\mathrm{N}_{#1}(#2)}
\newcommand{\meas}[2][d]{|#2|_{#1}}
\newcommand{\term}{\mathfrak{T}}
\newcommand{\Real}{\mathbb{R}}
\newcommand{\Natural}{\mathbb{N}}
\newcommand{\Poly}[2][]{\mathbb{P}_{#1}^{#2}}
\newcommand{\Mh}[1][h]{\mathcal{M}_{#1}}
\newcommand{\Th}[1][h]{\mathcal{T}_{#1}}
\newcommand{\Fh}[1][h]{\mathcal{F}_{#1}}
\newcommand{\fTh}[1][h]{\mathfrak{T}_{#1}}
\newcommand{\fFh}[1][h]{\mathfrak{F}_{#1}}
\newcommand{\normal}{\vec{n}}
\newcommand{\map}[1][T]{\phi_{#1}}
\newcommand{\Jmap}[1][T]{J\!\phi_{#1}}
\newcommand{\trans}[1]{\widehat{#1}}
\newcommand{\tT}{\trans{T}}
\newcommand{\tF}{\trans{F}}
\newcommand{\lproj}[2][h]{\pi_{#1}^{0,#2}}
\newcommand{\dproj}[2][T]{\pi_{\diff[],#1}^{1,#2}}
\newcommand{\tdproj}[2][\trans{T}]{\pi_{\diff[],\map[],#1}^{1,#2}}
\newcommand{\UT}[1][k]{\underline{U}_T^{#1}}
\newcommand{\UtT}[1][k]{\underline{U}_{\trans{T}}^{#1}}
\newcommand{\UhD}[1][k]{\underline{U}_{h,0}^{#1}}
\newcommand{\IT}[1][k]{\underline{I}_T^{#1}}
\newcommand{\ItT}[1][k]{\underline{I}_{\trans{T}}^{#1}}
\newcommand{\Ih}[1][k]{\underline{I}_h^{#1}}
\newcommand{\uu}[1][T]{\underline{u}_{#1}}
\newcommand{\uv}[1][T]{\underline{v}_{#1}}
\newcommand{\uw}[1][T]{\underline{w}_{#1}}
\newcommand{\dpT}[1][k+1]{\mathrm{p}_{\diff[],T}^{#1}}
\newcommand{\dptT}[1][k+1]{\mathrm{p}_{\diff[],\map[],\trans{T}}^{#1}}
\newcommand{\est}[2][T]{\varepsilon_{{\rm #2}\ifthenelse{\equal{#1}{}}{}{,#1}}}
\newcommand{\diff}[1][T]{\tens{K}\ifthenelse{\equal{#1}{}}{}{_{#1}}}
\newcommand{\sdiff}[1][T]{K\ifthenelse{\equal{#1}{}}{}{_{#1}}}
\newcommand{\tdiff}[1][\trans{T}]{\tens{K}\ifthenelse{\equal{#1}{}}{_{\map[]}}{_{\map[],#1}}}
\newcommand{\ltdiff}[1][\trans{T}]{\underline{K}_{\map[],#1}}
\newcommand{\utdiff}[1][\trans{T}]{\overline{K}_{\map[],#1}}
\newcommand{\stdiff}[1][\trans{T}]{K\ifthenelse{\equal{#1}{}}{_{\map[]}}{_{\map[],#1}}}
\newcommand{\ar}[1]{\alpha_{#1}}
\newcommand{\fl}{\mathrm{fl}}
\newcommand{\email}[1]{\href{mailto:#1}{\tt #1}}
\newtheorem{theorem}{Theorem}
\newtheorem{lemma}[theorem]{Lemma}
\newtheorem{proposition}[theorem]{Proposition}
\newtheorem{remark}[theorem]{Remark}
\newtheorem{assumption}[theorem]{Assumption}
\newtheorem{definition}[theorem]{Definition}
\begin{document}

\title{Interplay between diffusion anisotropy and mesh skewness in Hybrid High-Order schemes}

\author{J\'er\^ome Droniou \thanks{School of Mathematics,
Monash University, Victoria 3800, Australia, \email{jerome.droniou@monash.edu}.}}

\maketitle

\abstract{We explore the effects of mesh skewness on the accuracy of standard Hybrid High-Order (HHO) schemes for anisotropic diffusion equations. After defining a notion of regular skewed mesh sequences, which allows, e.g., for elements that become more and more elongated during mesh refinement, we establish an error estimate in which we precisely track the dependency of the local multiplicative constants in terms of the diffusion tensor and mesh skewness. 
This dependency makes explicit an interplay between the local diffusion properties and the distortion of the elements. We then provide several numerical results to assess the practical convergence properties of HHO for highly anisotropic diffusion or highly distorted meshes. These tests indicate a more robust behaviour than the theoretical estimate indicates.
\\
\textbf{Keywords}: Hybrid High-Order schemes, anisotropy, diffusion equation, skewed meshes.
}

\section{Introduction}\label{sec:intro}

The last few years have seen a increased interest in novel discretisation methods, for diffusion equations, that support polytopal meshes (made of general polygons/polyhedra) and allow for arbitrary approximation orders: Hybridisable Discontinuous Galerkin methods \cite{Cockburn.Gopalakrishnan.ea:09}, Virtual Element Methods (VEM) \cite{Beirao-da-Veiga.Brezzi.ea:13}, Weak Galerkin Methods \cite{Wang.Ye:13}, etc. The Hybrid High-Order (HHO) method \cite{Di-Pietro.Ern.ea:14,hho-book} is one of these arbitrary-order polytopal methods, and shares with the aforementioned ones the hybrid structure of unknowns (contrary to Discontinuous Galerkin methods \cite{Di-Pietro.Ern:12}), that is, unknowns located in the elements and on their faces. We refer to the introduction of \cite{hho-book} for a thorough review of the literature on polytopal methods. The HHO method can be seen as a high-order extension of the Hybrid Mimetic Mixed method \cite{Droniou.Eymard.ea:10} and, contrary to some other polytopal methods, it has a flux formulation that makes it a Finite Volume method. Additionally, the design of HHO schemes is dimension-independent and has an enhanced compliance with the physics due to the construction of local problem-dependent reconstruction operators.

HHO schemes have been applied to and analysed for a variety of models (see \cite{hho-book} and references therein), with error estimates that have an explicit dependency on the physical data. These estimates are however obtained for ``regular'' polytopal meshes, that is, meshes whose elements are ``isotropic'' (not elongated in any particular direction, and whose faces have a diameter comparable to their elements' diameters). In this work we analyse and numerically test the HHO scheme for highly anisotropic diffusion equations and families of distorted meshes, that no longer satisfy the usual regularity conditions. We consider the archetypal linear diffusion model
\begin{equation}\label{eq:pro}
\left\{\begin{array}{ll}
-\DIV(\diff[]\GRAD u) = f & \mbox{ in }\Omega,\\
u=0&\mbox{ on }\partial\Omega,
\end{array}\right.
\end{equation}
where $\Omega$ is a polytopal domain of $\Real^d$, $\diff[]:\Omega\to \Real^{d\times d}_{\rm sym}$ is a symmetric bounded uniformly coercive diffusion tensor, and $f\in L^2(\Omega)$. The solution to \eqref{eq:pro} is taken in the classical weak sense.

A review of historical or polytopal numerical methods on distorted meshes is out of this paper's scope. We however mention the recent works \cite{ABVW17,W19} about the VEM on anisotropic meshes, which present numerical results for a Poisson problem with internal layer, and derive approximation properties of the relevant interpolators. The novelty of our work, besides considering the HHO method instead of the VEM, is to establish complete error estimates (not just interpolator approximation properties) that take into account not only the distortion of the mesh, but also the high anisotropy of the diffusion tensor and the subtle interplay between these two features. The approach used here can be adapted to other methods, such as VEM, to yield error estimates that account for this interplay.

This paper is organised as follows. The concept of regular skewed mesh sequences, for which the error analysis will be carried out, is introduced in Section \ref{sec:meshes}; these meshes can have very elongated elements, provided that some local linear map transforms them into isotropic elements. The oblique elliptic projector is at the core of HHO schemes; its approximation properties on skewed elements are presented in Section \ref{sec:oblique.projector}, and are used in Section \ref{sec:analysis.hho} to perform the error analysis of HHO schemes on skewed meshes. This analysis is based on local transports of each skewed element $T$ into an isotropic element $\tT$; this transport identifies a new diffusion tensor on $\tT$, whose anisotropy properties dictate the contribution of $T$ to the global error estimate. The error estimate stated in Theorem \ref{th:error.est} therefore highlights how the diffusion anisotropy and the mesh skewness are combined in the multiplicative constants. This approach has the added advantage of leading to an error estimate that is as optimal as the standard error estimate for anisotropic diffusion models on regular (non-skewed) mesh sequences. In Section \ref{sec:numerics}, we perform a series of tests to evaluate the practical impact of high diffusion anisotropy and mesh skewness on the accuracy of HHO schemes. Some of the conclusions drawn from these tests are predicted by the error estimate but, overall, the HHO scheme is found to be more robust with respect to the diffusion anisotropy and mesh skewness than what the theoretical analysis seems to indicate. A conclusion is provided in Section \ref{sec:conclusion}.

\medskip

\noindent\textbf{Notations}. The Euclidean norm of a vector $\vec{\xi}\in\Real^d$ is denoted by $|\vec{\xi}|$. If $L:(\Real^d)^s\to\Real$ is an $s$-linear map, we define the norm of $L$ by
\[
\normml[s]{L}\coloneq\sup\{|L(\vec{\xi}_1,\ldots,\vec{\xi}_s)|\,:\,\vec{\xi}_i\in\Real^d\,,\;|\vec{\xi}_i|\le 1\,,\forall i=1,\ldots,s\}.
\]
For $X$ an open subset of $\Real^n$, $n\in\{d,d-1\}$, $(\cdot,\cdot)_X$ and $\norm[X]{\cdot}$ denote respectively the $L^2(X)$- or $L^2(X)^n$-inner product and norm. Letting $D^s v$ be the $s$-th order differential of $v$, the $H^s(X)$-seminorm of a function $v\in H^s(X)$ is $\seminorm[H^s(X)]{v}\coloneq \norm[X]{\normml[s]{D^s v}}$.

\section{Regular skewed mesh sequences}\label{sec:meshes}

Let us first briefly recall the definition of polytopal mesh, referring to \cite[Section 1.1]{hho-book} for details. A polytopal mesh of $\Omega$ is a couple $\Mh=(\Th,\Fh)$ where $\Th$ is a collection of disjoint polytopes $T$ ---the elements--- such that $\overline{\Omega}=\cup_{T\in\Th}\overline{T}$, and $\Fh$ is the set of mesh faces whose closures form a partition of $\cup_{T\in\Th}\partial T$, and such that each face is contained in one or two elements boundaries. Mesh faces can be different from the geometrical faces of the polytopes, the latter being possibly cut in two mesh faces in case of non-conforming mesh \cite[Fig. 1.2]{hho-book}. The diameter of a subset $X$ of $\Real^d$ is denoted by $h_X$. The index $h$ in $\Mh$ is the meshsize $h=\max_{T\in\Th} h_T$. For $T\in\Th$, we let $\Fh[T]$ be the set of faces $F\in\Fh$ such that $\partial T=\cup_{F\in\Fh[T]}\overline{F}$. The outer normal to $T$ on $F\in\Fh[T]$ is $\normal_{TF}$.
A matching simplicial mesh of $T\in\Th$ is a polytopal mesh of $T$ made of simplices and whose faces correspond to the geometrical simplicial faces.

We now define the concept of regular skewed mesh sequence, which allows for elements that become more and more stretched along the sequence, provided that each element can be linearly mapped onto an ``isotro\-pic'' element, that satisfies the regularity conditions of a standard regular mesh sequence \cite[Definition 1.9]{hho-book}.

\begin{definition}[Regular skewed mesh sequence] \label{def:reg.amesh}
Let $\mathcal H\subset (0,+\infty)$ be a countable set with $0$ as only accumulation point.
For each $h\in\mathcal H$, let $\Mh$ be a polytopal mesh and $\map[h]=(\map)_{T\in\Th}$ be a family of isomorphisms of $\Real^d$. The sequence $(\Mh,\map[h])_{h\in\mathcal H}$ is a \emph{regular skewed mesh sequence} if there exists $\varrho\in(0,1)$ such that, for all $h\in\mathcal H$ and all $T\in\Th$, the following properties hold:
\begin{enumerate}
\item\label{hT.htransT} Setting $\tT=\map(T)$, it holds $\varrho h_T\le h_{\tT}$ and  $\varrho h_{\tT}\le h_T$.
\item\label{tT.match} There is a matching simplicial mesh $(\fTh[\tT],\fFh[\tT])$ of $\tT$ such that, letting $\Fh[\tT]\coloneq\{\tF\coloneq\map(F)\,:\,F\in\Fh[T]\}$ be the set of faces of $\tT$, for any face $\sigma\in\fFh[\tT]$, either $\sigma\cap\partial\tT=\emptyset$ or there is $\tF\in\Fh[\tT]$ such that $\sigma\subset \tF$.
\item\label{transT.iso} For all $\tau\in \fTh[\tT]$, it holds $\varrho h_{\tT}\le h_\tau$ and $\varrho h_\tau\le r_\tau$, where $r_\tau$ is the inradius of $\tau$.
\end{enumerate}
\end{definition}


\begin{remark}[Comparison with \cite{W19}]
The notion of regular skewed mesh sequence is close to the notion of regular anisotropic mesh of \cite{W19}, in particular in the usage of maps from skewed elements to isotropic elements. A noticeable difference, however, is the requirement in \cite{W19} that two neighbouring elements $T,T'$ must have similar isotropy (that is, the corresponding mappings $\map[T],\map[T']$ must be close in a proper measure); this is due to the type of interpolators considered in \cite{W19}, which are adapted to VEM and therefore require to compute averaged values around each vertex. Such a requirement of similar isotropy for neighbouring elements is absent from Definition \ref{def:reg.amesh}, which is geared towards methods ---such as HHO--- whose interpolators are $L^2$-projections on cell and face polynomials; as a consequence, this definition allows for example for meshes with layers of very thin rectangles neighbouring layers of squares.
\end{remark}

In the rest of the paper, we consider a regular skewed mesh sequence $(\Mh,\map[h])_{h\in\mathcal H}$ with parameter $\varrho$, and we write $a\lesssim b$ if $a\le Cb$ with $C>0$ depending only on $\Omega$ and $\varrho$ and, when the inequality involves $H^s$ seminorms, also on the exponent $s$. We write $a\approx b$ if $a\lesssim b$ and $b\lesssim a$. We also make the following assumption.

\begin{assumption}[Piecewise constant diffusion tensor]
For all $h\in\mathcal{H}$, the diffusion tensor $\diff[]$ is piecewise constant on $\Th$. For any $T\in\Th$ we set $\diff=\diff[]_{|T}$.
\end{assumption}

Let $T$ be an element of one of the meshes $\Mh$. If $\vec{x}\in T$ we set $\trans{\vec{x}}=\map(\vec{x})\in\tT$. The gradient (resp. differential) with respect to $\trans{\vec{x}}$ is denoted by $\trans{\GRAD}$ (resp. $\trans{D}$). For $w\in L^2(T)$, the transport $\trans{w}\in L^2(\tT)$ of $w$ on $\tT$ is $\trans{w}(\trans{\vec{x}})=w(\vec{x})=w(\map^{-1}(\trans{\vec{x}}))$.
We also set $\Jmap=|{\det\map}|$, and define $\Jmap[T|F]$ as the absolute value of the determinant of the restriction $\map[T|F]:H_F\to H_{\tF}$, where $H_X$ denotes the hyperplane generated by $X=F$ or $\tF$; $\Jmap[T|F]$ can be computed using any pairs of orthonormal bases in $F$ and $\tF$. Letting $\map^t$ be the transpose of $\map$,
the relevant diffusion tensor on $\tT$ is:
\begin{equation}\label{eq:def.tdiff}
\tdiff=\map\diff\map^t.
\end{equation}
The maximal and minimal eigenvalues of $\tdiff$ are denoted by $\utdiff$ and $\ltdiff$.

\begin{lemma}[Transport relations]
\begin{enumerate}
\item\label{it1}\emph{Geometrical properties}. It holds $\normml[1]{\map^{-1}}\le \varrho^{-3}$ and,
for all $F\in\Fh[T]$, 
\begin{equation}\label{eq:phi.normal}
\map^t\normal_{\tT\tF}=\frac{\Jmap}{\Jmap[T|F]}\normal_{TF}.
\end{equation}

\item\label{it2}\emph{Transport of $L^2$-inner products and norms}. For all $w,z$ in $L^2(T)$ or $L^2(T)^d$, 
\begin{equation}
(w,z)_T=\Jmap^{-1} (\trans{w},\trans{z})_{\tT}\quad\mbox{ and }\quad
\norm[T]{w} = \Jmap^{-\nicefrac12}\norm[\tT]{\trans{w}}.
\label{eq:trans.norm}
\end{equation}
For all $F\in\Fh[T]$ and $w,z\in L^2(F)$,
\begin{equation}
(w,z)_F=\Jmap[T|F]^{-1}(\trans{w},\trans{z})_{\tF}\quad\mbox{ and }\quad
\norm[F]{w} = \Jmap[T|F]^{-\nicefrac12}\norm[\tF]{\trans{w}}.
\label{eq:trans.norm.F}
\end{equation}

\item\label{it3}\emph{Transport of derivatives}. For all $s\in\Natural$, $w\in H^s(T)$, $\vec{x}\in T$, it holds
\begin{equation}\label{eq:Ds.norm}
\normml[s]{\trans{D}^s\trans{w}(\trans{\vec{x}})}\lesssim \normml[s]{\trans{D^s w}(\trans{\vec{x}})}.
\end{equation}
For all $w,z\in H^1(T)$,
\begin{align}\label{eq:trans.grad}
&\trans{\GRAD w}(\trans{\vec{x}})=\GRAD w(\vec{x})=\map^t\trans{\GRAD}\trans{w}(\trans{\vec{x}})\qquad\forall \vec{x}\in T,\\
\label{eq:trans.grad.innerprod}
	&(\diff\nabla w,\nabla z)_T=\Jmap^{-1} (\tdiff\trans{\GRAD}\trans{w},\trans{\nabla}\trans{z})_{\tT}\,,\ \ 
\norm[T]{\diff^{\nicefrac12}\nabla w} = \Jmap^{-\nicefrac12}\norm[\tT]{\tdiff^{\nicefrac12}\trans{\nabla}\trans{w}}.
\end{align}

\end{enumerate}
\end{lemma}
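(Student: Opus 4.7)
The plan is to prove the three items in order. For item~\ref{it1}, the bound on $\normml[1]{\map^{-1}}$ follows from a standard inscribed-ball argument: pick any $\tau\in\fTh[\tT]$; its inscribed ball has radius $r_\tau$ and lies in $\tT$, so under the linear map $\map^{-1}$ it is sent to an ellipsoid contained in $T$, whence $\normml[1]{\map^{-1}}\,r_\tau\le h_T$. Chaining $r_\tau\ge\varrho h_\tau\ge\varrho^2 h_{\tT}\ge\varrho^3 h_T$ from items~\ref{transT.iso} and \ref{hT.htransT} of Definition~\ref{def:reg.amesh} then yields $\normml[1]{\map^{-1}}\le\varrho^{-3}$. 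For the normal transport formula~\eqref{eq:phi.normal}, the direction is immediate: since $\map$ sends tangent vectors to $F$ into tangent vectors to $\tF$, the vector $\map^t\normal_{\tT\tF}$ is orthogonal to $H_F$ and hence parallel to $\normal_{TF}$. The scaling factor $\Jmap/\Jmap[T|F]$ is then fixed either by the cofactor identity $\map^{-t}=(\det\map)^{-1}\mathrm{cof}(\map)$ applied to a unit normal, or equivalently by comparing, for a smooth test field on $\tT$, the two forms of the divergence theorem through the volume change of variables of Part~2.

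For item~\ref{it2}, the identities are direct applications of the change-of-variables formula to the linear isomorphism $\map$ and its surface restriction $\map[T|F]$: on $\tT$ the volume element transforms as ${\rm d}\trans{\vec{x}}=\Jmap\,{\rm d}\vec{x}$, giving the first identity in \eqref{eq:trans.norm}; on $\tF$ the hyperplane restriction $\map[T|F]$ is a linear isomorphism $H_F\to H_{\tF}$ with surface Jacobian $\Jmap[T|F]$, giving the first identity in \eqref{eq:trans.norm.F}. Setting $z=w$ and taking square roots yields the corresponding $L^2$ norm identities.

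For item~\ref{it3}, the pointwise gradient identity in \eqref{eq:trans.grad} is just the chain rule applied to $w=\trans{w}\circ\map$ together with $D\map=\map$. Iterating the chain rule on $\trans{w}=w\circ\map^{-1}$ gives, pointwise,
\[
\trans{D}^s\trans{w}(\trans{\vec{x}})(\vec{\eta}_1,\ldots,\vec{\eta}_s) = D^s w(\vec{x})(\map^{-1}\vec{\eta}_1,\ldots,\map^{-1}\vec{\eta}_s),
\]
so taking the operator norm and using item~\ref{it1} produces $\normml[s]{\trans{D}^s\trans{w}(\trans{\vec{x}})}\le\normml[1]{\map^{-1}}^s\normml[s]{D^sw(\vec{x})}\lesssim\normml[s]{D^sw(\vec{x})}$, which is \eqref{eq:Ds.norm}. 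Finally, for the diffusion inner product I combine \eqref{eq:trans.grad} with the symmetry of $\diff$: pointwise,
\[
\diff\GRAD w\SCAL\GRAD z=(\diff\map^t\trans{\GRAD}\trans{w})\SCAL(\map^t\trans{\GRAD}\trans{z})=(\map\diff\map^t\trans{\GRAD}\trans{w})\SCAL\trans{\GRAD}\trans{z}=(\tdiff\trans{\GRAD}\trans{w})\SCAL\trans{\GRAD}\trans{z},
\]
and integrating with the volume change of variables from Part~2 gives \eqref{eq:trans.grad.innerprod}; the norm identity follows by taking $z=w$ and using the positive-definiteness of $\tdiff$. The only nontrivial bookkeeping is the determinant factor in \eqref{eq:phi.normal}; all other steps are routine chain-rule and change-of-variables computations, with the simplicial submesh of Definition~\ref{def:reg.amesh} intervening only through the $\varrho^{-3}$ bound (and hence in the implicit constant of \eqref{eq:Ds.norm}).
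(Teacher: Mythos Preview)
Your proposal is correct and follows essentially the same approach as the paper's proof: the inscribed-ball argument for $\normml[1]{\map^{-1}}$, change of variables for item~\ref{it2}, and the chain rule plus item~\ref{it1} for item~\ref{it3} are all handled identically. The only stylistic difference is in \eqref{eq:phi.normal}: the paper writes the matrix of $\map$ in orthonormal bases $(\mathcal B_F,\normal_{TF})$ and $(\trans{\mathcal B}_{\tF},\normal_{\tT\tF})$, reads off its block upper-triangular structure, and identifies the $(d,d)$ entry as $\lambda=\Jmap/\Jmap[T|F]$, whereas you invoke the cofactor identity (equivalently the surface Jacobian formula $\Jmap[T|F]=\Jmap\,|\map^{-t}\normal_{TF}|$) --- these are two standard packagings of the same linear-algebra fact.
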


\begin{proof}
\ref{it1}. We have $\map^{-1}(\tT)=T$. Since $\tT$ contains a ball of radius $\varrho^2 h_{\tT}$ (Point \ref{transT.iso} in Definition \ref{def:reg.amesh}) and $T$ has diameter $h_T\le\varrho^{-1}h_{\tT}$, we see that $\map^{-1}$ maps a ball of radius $\varrho^2 h_{\tT}$ into a ball of radius $\varrho^{-1}h_{\tT}$. Hence, $\normml[1]{\map^{-1}}\le (\varrho^{-1}h_{\tT})/(\varrho^2 h_{\tT})=\varrho^{-3}$.

Select two orthonormal bases $\mathcal B=(\mathcal B_F,\normal_{TF})$ and $\trans{\mathcal B}=(\trans{\mathcal B}_F,\normal_{\tT\tF})$ of $\Real^d$, where $\mathcal B_F$ is a basis of $H_F$ and $\trans{\mathcal B}_F$ is a basis of $H_{\tF}$. Since $\map(F)=\trans{F}$, the matrix of $\map$ in $(\mathcal B,\trans{\mathcal B})$ is written
\[
\left[
\begin{array}{cc} A & *\\
0 & \lambda
\end{array}\right],
\]
where $A$ is the matrix of $\map[T|F]$ in $(\mathcal B_F,\trans{\mathcal B}_F)$. In particular,
$\Jmap = |{\det A}|\lambda=\Jmap[T|F]\lambda$ and thus $\lambda=\Jmap/\Jmap[T|F]$. Transposing the matrix above gives the matrix of $\map^t$ in the orthonormal bases $(\trans{\mathcal B},\mathcal B)$. Since the last vector of $\trans{\mathcal B}$ (resp. $\mathcal B$) is $\normal_{\tT\tF}$ (resp. $\normal_{TF}$), reading the last column of this transposed matrix gives $\map^t \normal_{\tT\tF}=\lambda \normal_{TF}$ and proves \eqref{eq:phi.normal}.

\medskip

\ref{it2}. Simple changes of variables (in $T$ or $F$) establish \eqref{eq:trans.norm} and \eqref{eq:trans.norm.F}.

\medskip

\ref{it3}. Since $\trans{w}(\trans{\vec{x}})=w(\map^{-1}(\trans{\vec{x}}))$, an induction on $s$ shows that, for all $\vec{\xi}_1,\ldots,\vec{\xi}_s\in\Real^d$,
\begin{align}
\trans{D}^s\trans{w}(\trans{\vec{x}})(\vec{\xi}_1,\ldots,\vec{\xi}_s)={}&D^s w(\map^{-1}(\trans{\vec{x}}))(\map^{-1}(\vec{\xi}_1),\cdots,\map^{-1}(\vec{\xi}_s))\nonumber\\
={}&\trans{D^s w}(\trans{\vec{x}})(\map^{-1}(\vec{\xi}_1),\cdots,\map^{-1}(\vec{\xi}_s)).
\label{eq:Ds.trans}
\end{align}
We infer that $|\trans{D}^s\trans{w}(\trans{\vec{x}})(\vec{\xi}_1,\ldots,\vec{\xi}_s)|
\le\normml[s]{\trans{D^s w}(\trans{\vec{x}})}\,|\map^{-1}(\vec{\xi}_1)|\,\cdots\,|\map^{-1}(\vec{\xi}_s)|$.
By Point \ref{it1}, $|\map^{-1}(\vec{\xi}_i)|\lesssim|\vec{\xi}_i|$ for all $i=1,\ldots,s$, and the proof of  \eqref{eq:Ds.norm} is complete

The relation \eqref{eq:trans.grad} is obtained transposing \eqref{eq:Ds.trans} for $s=1$. The second relation in \eqref{eq:trans.grad.innerprod} follows from the first with $z=w$. To prove this first relation, apply \eqref{eq:trans.norm} to $\diff\GRAD w$ and $\GRAD z$ instead of $w$ and $z$, use the fact that $\diff$ is constant and invoke \eqref{eq:trans.grad} to write
\[
(\diff\GRAD w,\GRAD z)_T=\Jmap^{-1}(\diff\trans{\GRAD w},\trans{\GRAD z})_{\tT}
=\Jmap^{-1}(\map\diff\map^t\trans{\GRAD}\trans{w},\trans{\GRAD}\trans{z})_{\tT}.
\]
\end{proof}

\section{Oblique elliptic projector on skewed elements}\label{sec:oblique.projector}

Here, $T$ is a generic element of $\Mh$. Fix a polynomial degree $\ell\ge 0$ and recall the definition in \cite[Section 3.2.1]{hho-book} of the oblique elliptic projector $\dproj{\ell}:H^1(T)\to \Poly{\ell}(T)$: for all $v\in H^1(T)$,
\begin{align}
\label{def:dproj.grad}
(\diff\GRAD\dproj{\ell}v,\GRAD w)_T={}&(\diff\GRAD v,\GRAD w)_T\quad\forall w\in\Poly{\ell}(T),\\
\label{def:dproj.ave}
(\dproj{\ell}v,1)_T ={}& (v,1)_T.
\end{align}
The approximation properties of the oblique elliptic projector form an essential component of the analysis of HHO schemes for \eqref{eq:pro}. To establish these approximation properties, let us first describe how the elliptic projector is transported through $\map$.

\begin{lemma}[Transport of the elliptic projector]
Letting $\tdproj{\ell}$ be the oblique elliptic projector on $\tT$ for the tensor $\tdiff$ defined by \eqref{eq:def.tdiff}, it holds
\begin{equation}\label{eq:dproj.tdproj}
\widehat{\dproj{\ell}v}=\tdproj{\ell}\widehat{v} \qquad\forall v\in H^1(T).
\end{equation}
\end{lemma}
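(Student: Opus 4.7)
The plan is to verify that $\widehat{\dproj{\ell}v}$ satisfies the two defining properties of $\tdproj{\ell}\widehat{v}$ on $\tT$, so that uniqueness of the elliptic projector forces the equality \eqref{eq:dproj.tdproj}.

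First I would observe that since $\map$ is a linear (affine) isomorphism of $\Real^d$, composition with $\map$ and $\map^{-1}$ preserves polynomial degrees. Hence the transport $w\mapsto\widehat{w}=w\circ\map^{-1}$ is a bijection from $\Poly{\ell}(T)$ onto $\Poly{\ell}(\tT)$, with inverse $\widehat{w}\mapsto \widehat{w}\circ\map$. In particular, $\widehat{\dproj{\ell}v}$ belongs to $\Poly{\ell}(\tT)$, which is a prerequisite for it to be a candidate for $\tdproj{\ell}\widehat{v}$.

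Next I would check the mean-value condition. Taking $w=\dproj{\ell}v$ and $z=1$ in the first identity of \eqref{eq:trans.norm}, and noting that $\widehat{1}=1$, gives
\[
(\widehat{\dproj{\ell}v},1)_{\tT}=\Jmap (\dproj{\ell}v,1)_T=\Jmap (v,1)_T=(\widehat{v},1)_{\tT},
\]
where the middle equality is \eqref{def:dproj.ave}. This reproduces the analogue of \eqref{def:dproj.ave} on $\tT$.

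Then I would verify the energy orthogonality. Fix an arbitrary $\widehat{w}\in\Poly{\ell}(\tT)$ and set $w\coloneq\widehat{w}\circ\map\in\Poly{\ell}(T)$, so that $\widehat{w}$ is precisely the transport of $w$. Applying the first identity of \eqref{eq:trans.grad.innerprod} twice (once to the pair $(\dproj{\ell}v,w)$ and once to $(v,w)$), together with the definition \eqref{def:dproj.grad} of $\dproj{\ell}v$, yields
\[
(\tdiff\widehat{\GRAD}\widehat{\dproj{\ell}v},\widehat{\GRAD}\widehat{w})_{\tT}
=\Jmap(\diff\GRAD\dproj{\ell}v,\GRAD w)_T
=\Jmap(\diff\GRAD v,\GRAD w)_T
=(\tdiff\widehat{\GRAD}\widehat{v},\widehat{\GRAD}\widehat{w})_{\tT}.
\]
Since $\widehat{w}$ ranges over all of $\Poly{\ell}(\tT)$ by the bijection argument above, $\widehat{\dproj{\ell}v}$ satisfies the analogue of \eqref{def:dproj.grad} on $\tT$ for $\tdiff$. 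By uniqueness of the oblique elliptic projector we conclude $\widehat{\dproj{\ell}v}=\tdproj{\ell}\widehat{v}$.

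There is essentially no obstacle here: the proof is a direct translation of the two defining identities through the transport formulas of the previous lemma. The only point that deserves care is the bijectivity $\Poly{\ell}(T)\leftrightarrow\Poly{\ell}(\tT)$, which requires $\map$ to be affine/linear (which it is, by definition of a regular skewed mesh sequence); this ensures that test functions can be swapped between the two sides of the transport without loss.
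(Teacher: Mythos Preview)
Your proof is correct and follows essentially the same approach as the paper: you verify directly that $\widehat{\dproj{\ell}v}$ satisfies the two defining conditions of $\tdproj{\ell}\widehat{v}$ via the transport identities \eqref{eq:trans.norm} and \eqref{eq:trans.grad.innerprod}, then conclude by uniqueness. The paper's argument is the same in substance, only organised slightly differently (it first shows the gradients of $\widehat{\dproj{\ell}v}$ and $\tdproj{\ell}\widehat{v}$ agree, then matches averages).
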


\begin{proof}
Take $w\in\Poly{\ell}(T)$ and write, using the definition \eqref{def:dproj.grad} of $\dproj{\ell}$, the transport relation \eqref{eq:trans.grad.innerprod} applied to $(v,w)$ instead of $(w,z)$, and the definition of $\tdproj{\ell}$ together with $\trans{w}\in\Poly{\ell}(\tT)$,
\begin{align}
(\diff\GRAD\dproj{\ell} v,\GRAD w)_T=(\diff\GRAD v,\GRAD w)_T
	={}& \Jmap^{-1}(\tdiff\trans{\GRAD}\trans{v},\trans{\GRAD}\trans{w})_{\tT}\nonumber\\
	={}&\Jmap^{-1}(\tdiff\trans{\GRAD}\tdproj{\ell}\trans{v},\trans{\GRAD}\trans{w})_{\tT}.
\label{eq:dproj.hproj.1}
\end{align}
On the other hand, \eqref{eq:trans.grad.innerprod} applied to $(\dproj{\ell} v,w)$ instead of $(w,z)$ gives
\[
(\diff\GRAD\dproj{\ell} v,\GRAD w)_T=\Jmap^{-1}(\tdiff\trans{\GRAD}\trans{\dproj{\ell}v},\trans{\GRAD}\trans{w})_{\tT}.
\]
Combining this relation with \eqref{eq:dproj.hproj.1} and using the fact that $\trans{w}$ is arbitrary in $\Poly{\ell}(\tT)$ yields $\trans{\GRAD}\tdproj{\ell}\trans{v}=\trans{\GRAD}\trans{\dproj{\ell}v}$. To prove \eqref{eq:dproj.tdproj} it remains to show that $\tdproj{\ell}\trans{v}$ and $\trans{\dproj{\ell}v}$ have the same average on $\tT$. This is done by using \eqref{eq:trans.norm} and \eqref{def:dproj.ave} (for both $\dproj{\ell}$ and $\tdproj{\ell}$) to write
$(\tdproj{\ell}\trans{v},1)_{\tT}=(\trans{v},1)_{\tT}=\Jmap(v,1)_T
=\Jmap(\dproj{\ell}v,1)_T=(\trans{\dproj{\ell}v},1)_{\tT}$.
\end{proof}

Let $\meas[n]{{\cdot}}$ be the $n$-dimensional Lebesgue measure. The following characteristic lengths will be used to state boundary approximation properties of $\dproj{\ell}$:
\begin{equation}\label{def:dTF}
	d_{TF}=\frac{\meas{T}}{\meas[d-1]{F}}\qquad\forall F\in\Fh[T].
\end{equation}
Using $\Jmap\meas{T}=\meas{\tT}$, $\Jmap[T|F]\meas[d-1]{F}=\meas[d-1]{\tF}$, and $\meas{\tT}\approx h_{\tF}\meas[d-1]{\tF}$ and $h_{\tF}\approx h_{\tT}$ (owing to the isotropy of $\tT$ and to \cite[Lemma 1.12]{hho-book}), we see that
\begin{equation}\label{eq:est.dTF}
d_{TF}\approx \frac{\Jmap[T|F]}{\Jmap} h_{\tF}\approx \frac{\Jmap[T|F]}{\Jmap} h_{\tT}.
\end{equation}

\begin{proposition}[Approximation properties of the elliptic projector on skewed elements]
For all $s\in\{1,\ldots,\ell+1\}$ and all $v\in H^s(T)$,
\begin{equation}\label{eq:dproj.approx.T}
\norm[T]{\diff^{\nicefrac12}\GRAD(v-\dproj{\ell}v)}\lesssim \utdiff^{\nicefrac12}h_T^{s-1} \seminorm[H^s(T)]{v}
\end{equation}
and, if $s\ge 2$, for all $F\in\Fh[T]$,
\begin{equation}\label{eq:dproj.approx.F}
d_{TF}^{\nicefrac12}\norm[F]{\diff^{\nicefrac12}\GRAD(v-\dproj{\ell}v)}\lesssim  \utdiff^{\nicefrac12} h_T^{s-1} \seminorm[H^s(T)]{v}.
\end{equation}
\end{proposition}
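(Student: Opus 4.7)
The plan is to transport the problem to the isotropic element $\tT$ via the map $\map$, apply the classical approximation properties of the oblique elliptic projector on the isotropic side (for the tensor $\tdiff$), and then transport the $H^s$-seminorm and the geometric factors back to $T$. The key point is that the identity \eqref{eq:dproj.tdproj} identifies the projection error on $T$ with the error of $\tdproj{\ell}$ on $\tT$, so that all the skewness of $T$ is encoded in the maps $\map$, the Jacobians $\Jmap$ and $\Jmap[T|F]$, and the transported tensor $\tdiff$.

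For \eqref{eq:dproj.approx.T}, I would combine \eqref{eq:trans.grad.innerprod} with \eqref{eq:dproj.tdproj} to write
\[
\norm[T]{\diff^{\nicefrac12}\GRAD(v-\dproj{\ell}v)} = \Jmap^{-\nicefrac12}\norm[\tT]{\tdiff^{\nicefrac12}\trans{\GRAD}(\trans{v}-\tdproj{\ell}\trans{v})}.
\]
Since $\tT$ admits an isotropic matching simplicial submesh by Properties \ref{hT.htransT}--\ref{transT.iso} of Definition \ref{def:reg.amesh}, the standard approximation theory of the oblique elliptic projector on such an element, together with the crude upper bound $\tdiff\le\utdiff\Id$, yields
\[
\norm[\tT]{\tdiff^{\nicefrac12}\trans{\GRAD}(\trans{v}-\tdproj{\ell}\trans{v})}\lesssim \utdiff^{\nicefrac12}\,h_{\tT}^{s-1}\,\seminorm[H^s(\tT)]{\trans{v}}.
\]
Transporting the seminorm back using \eqref{eq:Ds.norm} and the $L^2$-change of variables gives $\seminorm[H^s(\tT)]{\trans{v}}\lesssim\Jmap^{\nicefrac12}\seminorm[H^s(T)]{v}$; the two $\Jmap^{\pm\nicefrac12}$ factors cancel, and $h_{\tT}\lesssim h_T$ (from \ref{hT.htransT}) concludes \eqref{eq:dproj.approx.T}.

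For the face estimate \eqref{eq:dproj.approx.F}, the pointwise identity \eqref{eq:trans.grad} combined with $\map\diff\map^t=\tdiff$ gives $|\diff^{\nicefrac12}\GRAD(v-\dproj{\ell}v)(\vec{x})|=|\tdiff^{\nicefrac12}\trans{\GRAD}(\trans{v}-\tdproj{\ell}\trans{v})(\trans{\vec{x}})|$, after which \eqref{eq:trans.norm.F} produces
\[
\norm[F]{\diff^{\nicefrac12}\GRAD(v-\dproj{\ell}v)} = \Jmap[T|F]^{-\nicefrac12}\norm[\tF]{\tdiff^{\nicefrac12}\trans{\GRAD}(\trans{v}-\tdproj{\ell}\trans{v})}.
\]
On the isotropic element $\tT$ the standard trace-type approximation estimate gives $h_{\tT}^{\nicefrac12}\norm[\tF]{\tdiff^{\nicefrac12}\trans{\GRAD}(\trans{v}-\tdproj{\ell}\trans{v})}\lesssim \utdiff^{\nicefrac12}h_{\tT}^{s-1}\seminorm[H^s(\tT)]{\trans{v}}$. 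Multiplying the transported identity by $d_{TF}^{\nicefrac12}$ and invoking \eqref{eq:est.dTF}, the product $d_{TF}^{\nicefrac12}\Jmap[T|F]^{-\nicefrac12}$ becomes $\Jmap^{-\nicefrac12}h_{\tT}^{\nicefrac12}$ (up to $\approx$), and the same seminorm transport as above closes the argument.

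The only genuinely delicate point I anticipate is the ``standard'' face estimate for $\tdproj{\ell}$ on $\tT$: unlike the bulk estimate, it is not a direct best-approximation consequence because $\tdproj{\ell}$ is optimal only in the $\tdiff$-weighted bulk gradient norm. It follows however from combining a discrete trace inequality for polynomials with the bulk estimate above, or can be quoted directly from \cite{hho-book} applied to the isotropic element $\tT$. Everything else is a careful bookkeeping of the Jacobians $\Jmap$ and $\Jmap[T|F]$, for which the definitions of $\tdiff$ and $d_{TF}$ have been engineered precisely to yield clean cancellations.
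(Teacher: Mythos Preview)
Your proposal is correct and follows essentially the same route as the paper: transport the projection error to the isotropic element $\tT$ via \eqref{eq:dproj.tdproj} and \eqref{eq:trans.grad.innerprod}/\eqref{eq:trans.norm.F}, invoke the standard approximation estimates for $\tdproj{\ell}$ on $\tT$, and transport the $H^s$-seminorm back via \eqref{eq:Ds.norm} and \eqref{eq:trans.norm}, with \eqref{eq:est.dTF} absorbing the face Jacobian. Regarding your ``delicate point'', the paper simply quotes both the bulk and the trace estimate on $\tT$ from \cite[Theorem~3.3]{hho-book}, so no separate argument via a discrete trace inequality is needed.
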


\begin{proof}
Since $\tT$ satisfies Points \ref{tT.match} and \ref{transT.iso} in Definition \ref{def:reg.amesh}, \cite[Theorem 3.3]{hho-book} yields
\begin{align}\label{eq:tdproj.approx}
\norm[\tT]{\tdiff^{\nicefrac12}\trans{\GRAD}(\trans{v}-\tdproj{\ell}\trans{v})}
\lesssim{}& \utdiff^{\nicefrac12}h_{\tT}^{s-1}\seminorm[H^s(\tT)]{\trans{v}},\\
\label{eq:tdproj.approx.F}
h_{\tT}^{\nicefrac12}\norm[\tF]{\tdiff^{\nicefrac12}\trans{\GRAD}(\trans{v}-\tdproj{\ell}\trans{v})}
\lesssim{}& \utdiff^{\nicefrac12}h_{\tT}^{s-1}\seminorm[H^s(\tT)]{\trans{v}}\quad\forall\tF\in\Fh[\tT]
\quad\mbox{ (if $s\ge 2$)}.
\end{align}
The volumetric \eqref{eq:dproj.approx.T} and trace \eqref{eq:dproj.approx.F} estimates are obtained transporting these estimates with \eqref{eq:dproj.tdproj}.
We start with the volumetric estimate. Using \eqref{eq:dproj.tdproj} and \eqref{eq:trans.grad.innerprod} we have
\[
\norm[\tT]{\tdiff^{\nicefrac12}\trans{\GRAD}(\trans{v}-\tdproj{\ell}\trans{v})}
=\norm[\tT]{\tdiff^{\nicefrac12}\trans{\GRAD}\trans{(v-\dproj{\ell}v)}}
=\Jmap^{\nicefrac12}\norm[T]{\diff^{\nicefrac12}\GRAD(v-\dproj{\ell}v)}.
\]
Hence, applying \eqref{eq:tdproj.approx} and using the estimate $h_{\tT}\lesssim h_T$ (see Point \ref{hT.htransT} in Definition \ref{def:reg.amesh}),
\begin{equation}\label{eq:dproj.approx.1}
\norm[T]{\diff^{\nicefrac12}\GRAD(v-\dproj{\ell}v)}\lesssim
\Jmap^{-\nicefrac12}\utdiff^{\nicefrac12}h_T^{s-1}\seminorm[H^s(\tT)]{\trans{v}}.
\end{equation}
By the definition of the $H^s$-seminorm, the relation \eqref{eq:Ds.norm} and the transport \eqref{eq:trans.norm} give
\begin{equation}\label{eq:est.Hs.transv}
\seminorm[H^s(\tT)]{\trans{v}}\lesssim \norm[\tT]{\normml[s]{\trans{D^s v}}}
\lesssim \Jmap^{\nicefrac12} \norm[T]{\normml[s]{D^s v}}=\Jmap^{\nicefrac12}\seminorm[H^s(T)]{v}.
\end{equation}
Plugged into \eqref{eq:dproj.approx.1}, this concludes the proof of \eqref{eq:dproj.approx.T}. 
We now turn to \eqref{eq:dproj.approx.F}. The transport relations \eqref{eq:dproj.tdproj}, \eqref{eq:trans.grad} and \eqref{eq:trans.norm.F} together with the definition \eqref{eq:def.tdiff} of $\tdiff$ yield
\[
\norm[\tF]{\tdiff^{\nicefrac12}\trans{\GRAD}(\trans{v}-\tdproj{\ell}\trans{v})}
=\norm[\tF]{\tdiff^{\nicefrac12}\trans{\GRAD}\trans{(v-\dproj{\ell}v)}}
=\Jmap[T|F]^{\nicefrac12}\norm[F]{\diff^{\nicefrac12}\GRAD(v-\dproj{\ell}v)}.
\]
Estimate \eqref{eq:dproj.approx.F} follows plugging this relation into \eqref{eq:tdproj.approx.F}, using \eqref{eq:est.Hs.transv} and recalling \eqref{eq:est.dTF} and that $h_{\tT}\lesssim h_T$. 
\end{proof}

\begin{remark}[Optimality of the approximation properties]\label{rem:optimality.dproj}
This proof shows that \eqref{eq:dproj.approx.T} and \eqref{eq:dproj.approx.F} come from the corresponding inequalities \eqref{eq:tdproj.approx} and \eqref{eq:tdproj.approx.F} for isotropic elements, and from \eqref{eq:est.Hs.transv}, itself derived from \eqref{eq:Ds.norm}. The latter inequality is optimal in the sense that, for any $\map$, there are functions $w$ for which it is an equality. Hence, the approximation properties \eqref{eq:dproj.approx.T} and \eqref{eq:dproj.approx.F} for skewed elements are as optimal as the corresponding approximation properties for isotropic elements.
\end{remark}

\section{Analysis of HHO schemes on skewed meshes}\label{sec:analysis.hho}

We briefly recall the construction of HHO schemes for \eqref{eq:pro} (referring to \cite[Chapter 3.1]{hho-book} for a comprehensive presentation), and establish key properties for proving error estimates on skewed meshes. In the following, $k\ge 0$ is a fixed polynomial degree.

\subsection{Local space and potential reconstruction}\label{sec:spaces}

For $T\in\Th$, the local space of unknowns is
\[
\UT\coloneq\{\uv=(v_T,(v_F)_{F\in\Fh[T]})\,:\,v_T\in\Poly{k}(T)\,,\;
v_F\in\Poly{k}(F)\quad\forall F\in\Fh\}.
\]
Setting $\sdiff[TF]=\diff\normal_{TF}\cdot\normal_{TF}$, this space is endowed with the seminorm
\begin{equation}\label{norm.H1KT}
\norm[1,{\diff[]},T]{\uv}\coloneq\left(\norm[T]{\diff^{\nicefrac12}\GRAD v_T}^2+\sum_{F\in\Fh[T]}\frac{\sdiff[TF]}{d_{TF}}\norm[F]{v_F-v_T}^2\right)^{\nicefrac12}\qquad\forall \uv\in\UT.
\end{equation}
For isotropic elements, this norm is usually defined using $h_F$ instead of $d_{TF}$, see \cite[Section 3.1.3.2]{hho-book}. The choice made in \eqref{norm.H1KT} ensures, for skewed elements, optimal estimates in terms of $\map$.
The potential reconstruction $\dpT:\UT\to\Poly{k+1}(T)$ is such that, for all $\uv[T]\in\UT$ and $w\in\Poly{k+1}(T)$,
\begin{align}
(\diff\GRAD\dpT,\GRAD w)_T={}&(\diff\GRAD v_T,\GRAD w)_T+\sum_{F\in\Fh[T]}(v_F-v_T,\diff\GRAD w\cdot\normal_{TF})_F,
\label{eq:dpT.1}
\\
\label{eq:dpT.2}
(\dpT\uv[T],1)_T={}&(v_T,1)_T.
\end{align}

\begin{lemma}[Transport of potential reconstruction]
It holds
\begin{equation}\label{eq:trans.dpT}
\trans{\dpT \uv[T]}=\dptT\trans{\uv[T]}\qquad\forall\uv[T]\in\UT,
\end{equation}
where $\trans{\uv[T]}=(\trans{v_T},(\trans{v_F})_{F\in\Fh[T]})\in\UtT$ is the transported $\uv[T]$, and $\dptT$ is the potential reconstruction on $\tT$ for the diffusion tensor $\tdiff$.
\end{lemma}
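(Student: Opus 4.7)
The plan is to mirror the proof of the elliptic projector transport \eqref{eq:dproj.tdproj}: I would show that $\trans{\dpT\uv[T]}$ satisfies, on $\tT$, the two defining conditions \eqref{eq:dpT.1}--\eqref{eq:dpT.2} of the potential reconstruction $\dptT$ applied to $\trans{\uv[T]}$ for the diffusion tensor $\tdiff$. Since $\map$ is a linear isomorphism, the transport $w\mapsto\trans{w}$ is a bijection $\Poly{k+1}(T)\to\Poly{k+1}(\tT)$, so it suffices to test \eqref{eq:dpT.1} with arbitrary $w\in\Poly{k+1}(T)$ and transport each term.

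For the volumetric terms on both sides of \eqref{eq:dpT.1}, I apply \eqref{eq:trans.grad.innerprod} (with the constant diffusion $\diff$ on $T$) to rewrite them as $\Jmap^{-1}(\tdiff\trans{\GRAD}\trans{\dpT\uv[T]},\trans{\GRAD}\trans{w})_{\tT}$ and $\Jmap^{-1}(\tdiff\trans{\GRAD}\trans{v_T},\trans{\GRAD}\trans{w})_{\tT}$ respectively. The main point of the proof is the face terms: I use \eqref{eq:trans.norm.F} to pass the integral $(v_F-v_T,\diff\GRAD w\cdot\normal_{TF})_F$ to the face $\tF$ (picking up a factor $\Jmap[T|F]^{-1}$), and then use the relation \eqref{eq:phi.normal} between $\normal_{TF}$ and $\map^t\normal_{\tT\tF}$ together with \eqref{eq:trans.grad} to compute
\[
\diff\GRAD w\SCAL\normal_{TF}=(\map^t\trans{\GRAD}\trans{w})\SCAL\diff\normal_{TF}
=\tfrac{\Jmap[T|F]}{\Jmap}\tdiff\trans{\GRAD}\trans{w}\SCAL\normal_{\tT\tF},
\]
where I have used $\map\diff\map^t=\tdiff$ from \eqref{eq:def.tdiff}. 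The two Jacobian factors combine to give $\Jmap^{-1}$ in front of each face term, so after multiplying \eqref{eq:dpT.1} through by $\Jmap$ I obtain exactly the defining relation of $\dptT\trans{\uv[T]}$ tested against the arbitrary $\trans{w}\in\Poly{k+1}(\tT)$. This proves $\trans{\GRAD}\trans{\dpT\uv[T]}=\trans{\GRAD}\dptT\trans{\uv[T]}$.

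To fix the constant, I transport the closure condition \eqref{eq:dpT.2} via \eqref{eq:trans.norm}, yielding $(\trans{\dpT\uv[T]},1)_{\tT}=(\trans{v_T},1)_{\tT}$, which matches the closure condition defining $\dptT\trans{\uv[T]}$. Combining this with the equality of gradients gives \eqref{eq:trans.dpT}. The only delicate point is the face-term algebra, which relies crucially on the $\Jmap[T|F]/\Jmap$ factor in \eqref{eq:phi.normal}: this is precisely what makes the contributions of the face terms scale the same way as the volumetric terms, so that the single global factor $\Jmap$ can be cancelled. No new regularity of the mesh is required beyond Definition \ref{def:reg.amesh}, and no use of the constants $\utdiff,\ltdiff$ is needed—the result is purely algebraic in the transport relations already established.
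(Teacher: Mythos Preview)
Your proposal is correct and follows essentially the same approach as the paper's proof: transport the volumetric terms via \eqref{eq:trans.grad.innerprod}, transport each face term via \eqref{eq:trans.norm.F} and the normal relation \eqref{eq:phi.normal} (which produces exactly the cancellation of Jacobian factors you describe), and then match the averages using \eqref{eq:trans.norm} and \eqref{eq:dpT.2}. The only cosmetic difference is that the paper writes the face-term identity as $\diff\trans{\GRAD w}\cdot\normal_{TF}=\frac{\Jmap[T|F]}{\Jmap}\tdiff\trans{\GRAD}\trans{w}\cdot\normal_{\tT\tF}$ after invoking \eqref{eq:trans.grad}, which is the same computation you perform.
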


\begin{proof} 
For all $w\in\Poly{k+1}(T)$,
\begin{align}
(\tdiff\trans{\GRAD}{}&\trans{\dpT\uv[T]},\trans{\GRAD}\trans{w})_{\tT}=
	\Jmap(\diff\GRAD \dpT\uv[T],\GRAD w)_T\nonumber\\
	={}&
	\Jmap(\diff\GRAD v_T,\GRAD w)_T+\Jmap\sum_{F\in\Fh[T]}(v_F-v_T,\diff\GRAD w\cdot\normal_{TF})_F\nonumber\\
	={}&
	(\tdiff\trans{\GRAD}\trans{v_T},\trans{\GRAD}\trans{w})_{\tT}
	+\Jmap\sum_{F\in\Fh[T]}\Jmap[T|F]^{-1}(\trans{v_F}-\trans{v_T},\diff\trans{\GRAD w}\cdot\normal_{TF})_{\tF},
	\label{eq:dpT.dptT.1}
\end{align}
where we have used in this order the transport relation \eqref{eq:trans.grad.innerprod}, the definition \eqref{eq:dpT.1} of $\dpT$, the transport relations \eqref{eq:trans.grad.innerprod} and \eqref{eq:trans.norm.F}, and the fact that $\diff$ and $\normal_{TF}$ are constant.
Invoking \eqref{eq:trans.grad}, \eqref{eq:phi.normal} and \eqref{eq:def.tdiff}, we have 
\[
\diff\trans{\GRAD w}\cdot\normal_{TF}=\tdiff\trans{\GRAD}\trans{w}\cdot (\map^{-1})^t\normal_{TF}=
\frac{\Jmap[T|F]}{\Jmap}\tdiff\trans{\GRAD}\trans{w}\cdot\normal_{\tT\tF}
\]
and \eqref{eq:dpT.dptT.1} gives
\begin{align*}
(\tdiff\trans{\GRAD}\trans{\dpT\uv[T]},\trans{\GRAD}\trans{w})_{\tT}={}&
(\tdiff\trans{\GRAD}\trans{v_T},\trans{\GRAD}\trans{w})_{\tT}
	+\sum_{F\in\Fh[T]}(\trans{v_F}-\trans{v_T},\tdiff\trans{\GRAD}\trans{w}\cdot\normal_{\tT\tF})_{\tF}\\
={}&(\tdiff\trans{\GRAD}\dptT\trans{\uv[T]},\trans{\GRAD}\trans{w})_{\tT},
\end{align*}
the conclusion following from the definition of $\dptT$. Since $\trans{w}$ is arbitrary in $\Poly{k+1}(\tT)$, this proves that $\trans{\dpT\uv[T]}$ and $\dptT\trans{\uv[T]}$ have the same gradient. Using \eqref{eq:trans.norm} and \eqref{eq:dpT.2} we also see that they have same average on $\tT$, which concludes the proof of \eqref{eq:trans.dpT}.\end{proof}

\subsection{Local bilinear form}\label{sec:loc.bilinear}

The difference operators $\delta_{\diff[],T}^k:\UT\to\Poly{k}(T)$ and, for $F\in\Fh[T]$, $\delta_{\diff[],TF}^k:\UT\to\Poly{k}(F)$ are defined by: for all $\uv[T]\in\UT$,
\begin{equation}\label{eq:def.delta}
\delta_{\diff[],T}^k\uv[T]\coloneq \lproj[T]{k}(\dpT\uv[T]-v_T)\,,\quad
\delta_{\diff[],TF}^k\uv[T]=\lproj[F]{k}(\dpT\uv[T]-v_F)\qquad\forall F\in\Fh[T],
\end{equation}
where, for $X=T$ or $F$, $\lproj[X]{k}:L^2(X)\to\Poly{k}(X)$ is the $L^2(X)$-orthogonal projection. We note that, for any $w\in L^2(X)$,
\begin{equation}\label{eq:trans.proj}
\trans{\lproj[X]{k}w}=\lproj[\trans{X}]{k}\trans{w}.
\end{equation}
The local stabilisation bilinear form is given by: for all $\uu[T],\uv[T]\in\UT$,
\begin{equation}\label{eq:sT}
	\mathrm{s}_{\diff[],T}(\uu[T],\uv[T])\coloneq\sum_{F\in\Fh[T]}\frac{\sdiff[TF]}{d_{TF}}(\delta_{\diff[],TF}^k\uu[T]-\delta_{\diff[],T}^k\uu[T],\delta_{\diff[],TF}^k\uv[T]-\delta_{\diff[],T}^k\uv[T])_F.
\end{equation}
The local HHO bilinear form $\mathrm{a}_{\diff[],T}:\UT\times\UT\to\Real$ is then defined by:
\begin{equation}\label{eq:aT}
	\mathrm{a}_{\diff[],T}(\uu[T],\uv[T])\coloneq(\diff\GRAD\dpT\uu[T],\GRAD\dpT\uv[T])_T+\mathrm{s}_{\diff[],T}(\uu[T],\uv[T])\quad\forall \uu[T],\uv[T]\in\UT.
\end{equation}
In the right-hand side above, the first term is responsible for the consistency of the bilinear form, while the addition of the second term ensures the stability and boundedness property stated in the following proposition. Other choices of $\mathrm{s}_{\diff[],T}$ are possible \cite[Assumption 3.9]{hho-book}, and, on isotropic meshes, the factor $d_{TF}$ in this stabilisation bilinear form can be replaced by $h_F$.

\begin{proposition}[Stability and boundedness of {$\mathrm{a}_{\diff[],T}$}] It holds
\begin{equation}\label{eq:equiv.norms}
\mathrm{a}_{\diff[],T}(\uv[T],\uv[T])\approx \norm[1,{\diff[]},T]{\uv[T]}^2\qquad\forall \uv[T]\in\UT.
\end{equation}
\end{proposition}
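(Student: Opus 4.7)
The plan is to transport the question from the skewed element $T$ to the isotropic element $\tT$, where we can invoke the standard HHO local stability estimate as proven in \cite[Lemma 2.18 or 3.11]{hho-book} (whose constants depend only on the mesh-regularity parameter $\varrho$, not on the diffusion). Concretely, I will show the two chains of equivalences
\[
\mathrm{a}_{\diff[],T}(\uv[T],\uv[T]) \approx \Jmap^{-1}\,\mathrm{a}_{\tdiff,\tT}(\trans{\uv[T]},\trans{\uv[T]}),\qquad
\norm[1,{\diff[]},T]{\uv[T]}^2 \approx \Jmap^{-1}\,\norm[1,\tdiff,\tT]{\trans{\uv[T]}}^2,
\]
and then glue them via the standard HHO norm equivalence applied on $\tT$, whose simplicial submesh is regular by Points \ref{tT.match}--\ref{transT.iso} of Definition \ref{def:reg.amesh}.

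For the consistency term, the transport relation \eqref{eq:trans.dpT} together with \eqref{eq:trans.grad.innerprod} gives
\[
(\diff\GRAD\dpT\uv[T],\GRAD\dpT\uv[T])_T = \Jmap^{-1}(\tdiff\trans{\GRAD}\dptT\trans{\uv[T]},\trans{\GRAD}\dptT\trans{\uv[T]})_{\tT}.
\]
For the stabilisation, I will use \eqref{eq:trans.proj} to obtain $\trans{\delta_{\diff[],T}^k\uv[T]}=\delta_{\tdiff,\tT}^k\trans{\uv[T]}$ and likewise for the face difference operator, then \eqref{eq:trans.norm.F} to transport the face $L^2$-norm. Finally, combining the key scaling identity $\sdiff[TF] = (\Jmap[T|F]/\Jmap)^{2}\stdiff[\tT\tF]$ (a direct consequence of \eqref{eq:phi.normal} and \eqref{eq:def.tdiff}) with the estimate $d_{TF}\approx (\Jmap[T|F]/\Jmap)\,h_{\tT} \approx (\Jmap[T|F]/\Jmap)\,d_{\tT\tF}$ from \eqref{eq:est.dTF} and the isotropy of $\tT$, the weight $\sdiff[TF]/d_{TF}$ becomes $\approx (\Jmap[T|F]/\Jmap)\,\stdiff[\tT\tF]/d_{\tT\tF}$, so that the $\Jmap[T|F]^{-1}$ coming from the face-norm transport cancels one factor and leaves exactly $\Jmap^{-1}\stdiff[\tT\tF]/d_{\tT\tF}$. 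Summing gives $\mathrm{s}_{\diff[],T}(\uv[T],\uv[T])\approx \Jmap^{-1}\mathrm{s}_{\tdiff,\tT}(\trans{\uv[T]},\trans{\uv[T]})$. The same two ingredients (the transport of the volumetric diffusion-weighted gradient norm via \eqref{eq:trans.grad.innerprod}, and the scaling identity for the boundary term) produce the corresponding equivalence for the $\norm[1,{\diff[]},T]{\cdot}$ norm.

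With these two equivalences in hand, the proof is concluded by invoking the standard HHO local stability and boundedness on the isotropic element $\tT$ equipped with the (possibly anisotropic) diffusion tensor $\tdiff$: since the simplicial submesh $\fTh[\tT]$ is regular with parameter controlled by $\varrho$, and since the bilinear form $\mathrm{a}_{\tdiff,\tT}$ uses a stabilisation whose weights match the $\tdiff$-weighted norm on $\tT$, the classical result gives $\mathrm{a}_{\tdiff,\tT}(\trans{\uv[T]},\trans{\uv[T]})\approx \norm[1,\tdiff,\tT]{\trans{\uv[T]}}^2$ with constants independent of $\tdiff$ and depending only on $\varrho$ and $k$.

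The main technical obstacle is bookkeeping the various $\Jmap$ and $\Jmap[T|F]$ factors and checking that they cancel to produce an equivalence with constants depending only on $\varrho$ (and not on the skewness of $\map$, which enters only through $\tdiff$). In particular, the choice of $d_{TF}$ rather than $h_F$ in the definition \eqref{norm.H1KT} of the norm is what makes this cancellation work, which is precisely the motivation stated after \eqref{norm.H1KT}. A secondary delicate point is ensuring that the standard HHO stability invoked on $\tT$ is indeed uniform in the diffusion tensor $\tdiff$; this uniformity is built into the diffusion-weighted form of the stabilisation $\mathrm{s}_{\diff[],T}$ and the norm $\norm[1,{\diff[]},T]{\cdot}$, which are scaled consistently by $\sdiff[TF]$.
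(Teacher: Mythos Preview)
Your proposal is correct and follows essentially the same approach as the paper: transport both $\mathrm{a}_{\diff[],T}$ and $\norm[1,{\diff[]},T]{\cdot}$ to the isotropic element $\tT$ via the identities \eqref{eq:trans.grad.innerprod}, \eqref{eq:trans.norm.F}, \eqref{eq:trans.dpT}, \eqref{eq:trans.proj}, and the scaling $\sdiff[TF]=(\Jmap[T|F]/\Jmap)^2\stdiff[\tT\tF]$ (which the paper packages as \eqref{eq:trans.KTF}), then invoke the standard diffusion-weighted HHO stability on $\tT$. The only cosmetic difference is that the paper writes the transported norm and stabilisation on $\tT$ with the weight $\stdiff[\tT\tF]/h_{\tF}$ rather than $\stdiff[\tT\tF]/d_{\tT\tF}$, and cites \cite[Proposition~3.13]{hho-book} for the isotropic equivalence; since $d_{\tT\tF}\approx h_{\tF}$ on the regular element $\tT$, this is immaterial.
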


\begin{proof}
\emph{Step 1: transport of seminorms}.
Let $\norm[1,{\diff[]},{\map[]},\tT]{{\cdot}}$ be defined on $\UtT$ by:
\[
\norm[1,{\diff[]},{\map[]},\tT]{\trans{\uv[T]}}^2\coloneq\norm[\tT]{\tdiff^{\nicefrac12}\trans{\GRAD}\trans{v_T}}^2+\sum_{\tF\in\Fh[\tT]}\frac{\stdiff[\tT\tF]}{h_{\tF}}\norm[\tF]{\trans{v_F}-\trans{v_T}}^2\qquad
\forall \trans{\uv[T]}\in\UtT,
\]
where $\stdiff[\tT\tF]\coloneq\tdiff\normal_{\tT\tF}\cdot\normal_{\tT\tF}$. If $\uv[T]\in\UT$, the transport relations \eqref{eq:trans.grad.innerprod} and \eqref{eq:trans.norm.F} yield
\begin{equation}
\norm[1,{\diff[]},T]{\uv[T]}^2=\Jmap^{-1}\norm[\tT]{\tdiff^{\nicefrac12}\trans{\GRAD}\trans{v_T}}^2+\sum_{F\in\Fh[T]}\frac{\sdiff[TF]}{d_{TF}}\Jmap[T|F]^{-1}\norm[\tF]{\trans{v_F}-\trans{v_T}}^2.
\label{eq:1T.trans}
\end{equation}
Starting from $\sdiff[TF]=\diff\normal_{TF}\cdot\normal_{TF}$, the relations \eqref{eq:phi.normal}, \eqref{eq:est.dTF} and \eqref{eq:def.tdiff} yield
\begin{equation}
\frac{\sdiff[TF]}{d_{TF}}\Jmap[T|F]^{-1}=\frac{\diff\frac{\Jmap[T|F]}{\Jmap}\map^t\normal_{\tT\tF}\cdot\frac{\Jmap[T|F]}{\Jmap}\map^t\normal_{\tT\tF}}{d_{TF}\Jmap[T|F]}
\approx\Jmap^{-1}\frac{\stdiff[\tT\tF]}{h_{\tF}}.
\label{eq:trans.KTF}
\end{equation}
Plugged into \eqref{eq:1T.trans}, this gives
\begin{equation}\label{eq:1T.trans.0}
\norm[1,{\diff[]},T]{\uv[T]}^2\approx \Jmap^{-1}\norm[1,{\diff[]},{\map[]},\tT]{\trans{\uv[T]}}^2.
\end{equation}

\emph{Step 2: transport of bilinear forms}.
Let $\mathrm{a}_{\diff[],\map[],\tT}:\UtT\times\UtT\to\Real$ be the standard local HHO bilinear form on $\tT$ for $\tdiff$:
\begin{align*}
	\mathrm{a}_{\diff[],\map[],\tT}(\trans{\uv[T]},\trans{\uw[T]})\coloneq{}&(\tdiff\trans{\GRAD}\dptT\trans{\uv[T]},\trans{\GRAD}\dptT\trans{\uw[T]})_{\tT}+
\mathrm{s}_{\diff[],\map[],\tT}(\trans{\uv[T]},\trans{\uw[T]}),\mbox{ where}\\
\mathrm{s}_{\diff[],\map[],\tT}(\trans{\uv[T]},\trans{\uw[T]})\coloneq{}&\sum_{\tF\in\Fh[\tT]}\frac{\stdiff[\tT\tF]}{h_{\tF}}(\delta_{\diff[],\map[],\tT\tF}^k\trans{\uv[T]}-\delta_{\diff[],\map[],\tT}^k\trans{\uv[T]},\delta_{\diff[],\map[],\tT\tF}^k\trans{\uw[T]}-\delta_{\diff[],\map[],\tT}^k\trans{\uw[T]})_{\tF}
\end{align*}
with difference operators $\delta_{\diff[],\map[],\tT}$ and $(\delta_{\diff[],\map[],\tT\tF})_{\tF\in\Fh[\tT]}$ defined on $\UtT$ in a similar way as in \eqref{eq:def.delta}, using $\dptT$ instead of $\dpT$.
Let $\uv[T]\in\UT$. Relations \eqref{eq:def.delta}, \eqref{eq:trans.dpT} and \eqref{eq:trans.proj} show that $\trans{\delta_{\diff[],T}^k\uv[T]}=\delta_{\diff[],\map[],\tT\tF}\trans{\uv[T]}$ and $\trans{\delta_{\diff[],TF}^k\uv[T]}=\delta_{\diff[],\map[],\tT\tF}\trans{\uv[T]}$. Hence, by \eqref{eq:trans.norm.F} and \eqref{eq:trans.KTF},
\begin{equation}\label{eq:trans.sT}
\mathrm{s}_{\diff[],T}(\uv[T],\uv[T])\approx \Jmap^{-1}\mathrm{s}_{\diff[],\map[],\tT}(\trans{\uv[T]},\trans{\uw[T]})
\end{equation}
and, recalling \eqref{eq:trans.grad.innerprod},
\[
(\diff\GRAD\dpT\uu[T],\GRAD\dpT\uv[T])_T=\Jmap^{-1}(\tdiff\trans{\GRAD}\dptT\trans{\uv[T]},\trans{\GRAD}\dptT\trans{\uv[T]})_T.
\]
This leads to
\begin{equation}
\mathrm{a}_{\diff[],T}(\uv[T],\uv[T])
\approx\Jmap^{-1}\mathrm{a}_{\diff[],\map[],\tT}(\trans{\uv[T]},\trans{\uv[T]}).
\label{eq:aT.atT}
\end{equation}

\emph{Step 3: conclusion}. Since $\tT$ is isotropic, \cite[Proposition 3.13]{hho-book} yields $\mathrm{a}_{\diff[],\map[],\tT}(\trans{\uv[T]},\trans{\uv[T]})\approx \norm[1,{\diff[]},{\map[]},\tT]{\trans{\uv[T]}}^2$. Using \eqref{eq:1T.trans.0} and \eqref{eq:aT.atT}, the proof of \eqref{eq:equiv.norms} is complete. \end{proof}

\subsection{HHO scheme and error estimate}\label{sec:estimates}

The global discrete space of unknowns is obtained patching local spaces and enforcing homogeneous Dirichlet boundary conditions:
\begin{align*}
\UhD\coloneq\{\uv[h]=({}&(v_T)_{T\in\Th},(v_F)_{F\in\Fh})\,:\,v_T\in\Poly{k}(T)\quad\forall T\in\Th\,,\\
&v_F\in\Poly{k}(F)\quad\forall F\in\Fh\,,\;v_F=0\quad\forall F\subset\partial\Omega\}.
\end{align*}
The restriction of $\uv[h]\in\UhD$ to an element $T$ is $\uv[T]=(v_T,(v_F)_{F\in\Fh[T]})\in\UT$. The interpolator $\Ih:H^1_0(\Omega)\to\UhD$ is such that, for $v\in H^1_0(\Omega)$,
\[
\Ih v\coloneq((\lproj[T]{k}v)_{T\in\Th},(\lproj[F]{k}v_{|F})_{F\in\Fh}).
\]
The local interpolator on $T\in\Th$ is $\IT:H^1(T)\to\UT$ such that, for $v\in H^1(T)$, $\IT v=(\lproj[T]{k}v,(\lproj[F]{k}v_{|F})_{F\in\Fh[T]})$.
The global HHO bilinear form $\mathrm{a}_{\diff[],h}:\UhD\times\UhD\to\Real$ is assembled from local contributions: for $\uv[h],\uw[h]\in\UhD$,
\[
\mathrm{a}_{\diff[],h}(\uu[h],\uv[h])\coloneq\sum_{T\in\Th}\mathrm{a}_{\diff[],T}(\uu[T],\uv[T]).
\]
This global bilinear form defines the energy norm such that, for $\uv[h]\in\UhD$,
\begin{equation}\label{eq:norm.aKh}
\norm[\mathrm{a},{\diff[]},h]{\uv[h]}\coloneq\mathrm{a}_{\diff[],h}(\uv[h],\uv[h])^{\nicefrac12}.
\end{equation}
The HHO scheme for \eqref{eq:pro} is written: find $\uu[h]\in\UhD$ such that
\begin{equation}\label{eq:hho}
\mathrm{a}_{\diff[],h}(\uu[h],\uv[h])=\sum_{T\in\Th}(f,v_T)_T\qquad\forall \uv[h]\in\UhD.
\end{equation}
This scheme is well-posed, and is a Finite Volume scheme in the sense that it has a flux formulation \cite[Lemma 3.17]{hho-book}. Our main result is the following theorem.

\begin{theorem}[Discrete energy error estimate for HHO schemes on skewed meshes]\label{th:error.est}
Assume that the weak solution $u\in H^1_0(\Omega)$ to \eqref{eq:pro} is such that, for some $r\in\{0,\ldots,k\}$, $u_{|T}\in H^{r+2}(T)$ for all $T\in\Th$. Let $\uu[h]\in\UhD$ be the solution to the HHO scheme \eqref{eq:hho}. Then, it holds
\begin{equation}\label{eq:error}
\norm[\mathrm{a},{\diff[]},h]{\Ih u-\uu[h]}\lesssim \left(\sum_{T\in\Th}\utdiff\ar{\diff[],\map[],\tT}h_T^{2(r+1)}\seminorm[H^{r+2}(T)]{u}^2\right)^{\nicefrac12},
\end{equation}
where $\ar{\diff[],\map[],\tT}$ is the anisotropy ratio of $\tdiff$, defined by $\ar{\diff[],\map[],\tT}\coloneq \frac{\utdiff}{\ltdiff}$.
\end{theorem}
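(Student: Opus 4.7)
The plan is to follow the classical HHO error-analysis roadmap (see \cite[Chapter 3]{hho-book}) while carefully lifting each element-local argument through $\map$ so that the constants reflect the transported tensor $\tdiff$ rather than $\diff$. The starting point is the stability \eqref{eq:equiv.norms} summed over $T\in\Th$: combined with the scheme \eqref{eq:hho}, it yields a Third-Strang--type reduction
\[
\norm[\mathrm{a},{\diff[]},h]{\Ih u-\uu[h]}\lesssim\sup_{\uv[h]\in\UhD\setminus\{0\}}\frac{|\mathcal{E}_h(\uv[h])|}{\norm[\mathrm{a},{\diff[]},h]{\uv[h]}},\qquad \mathcal{E}_h(\uv[h])\coloneq\mathrm{a}_{\diff[],h}(\Ih u,\uv[h])-\sum_{T\in\Th}(f,v_T)_T.
\]
The rest of the proof is a bound of $|\mathcal{E}_h(\uv[h])|$ by the square root of the right-hand side of \eqref{eq:error} times $\norm[\mathrm{a},{\diff[]},h]{\uv[h]}$, assembled from elementwise estimates.

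To decompose $\mathcal{E}_T(\uv[T])\coloneq\mathrm{a}_{\diff[],T}(\IT u,\uv[T])-(f,v_T)_T$, I would rely on the ``magic'' identity $\dpT\IT u=\dproj{k+1}u$ (immediate from \eqref{def:dproj.grad}--\eqref{def:dproj.ave} and \eqref{eq:dpT.1}--\eqref{eq:dpT.2}, since $\diff$ is constant on $T$). Writing $\mathrm{a}_{\diff[],T}(\IT u,\uv[T])=(\diff\GRAD\dproj{k+1}u,\GRAD\dpT\uv[T])_T+\mathrm{s}_{\diff[],T}(\IT u,\uv[T])$, using \eqref{def:dproj.grad} to replace $\diff\GRAD\dproj{k+1}u$ by $\diff\GRAD u$ against $\dpT\uv[T]\in\Poly{k+1}(T)$, integrating by parts $(f,v_T)_T$ via $f=-\DIV(\diff\GRAD u)$, and applying \eqref{eq:dpT.1} with $w=\dproj{k+1}u$ to rewrite the resulting $(\diff\GRAD u,\GRAD(\dpT\uv[T]-v_T))_T$, one reaches (after using normal-flux continuity of $\diff\GRAD u$ at interior faces and $v_F=0$ on $\partial\Omega$ to cancel, upon summation over $T$, the contributions $\sum_F(v_F,\diff\GRAD u\SCAL\normal_{TF})_F$) the elementwise residual identity
\[
\mathcal{E}_T(\uv[T])=\sum_{F\in\Fh[T]}\bigl(v_F-v_T,\diff\GRAD(\dproj{k+1}u-u)\SCAL\normal_{TF}\bigr)_F+\mathrm{s}_{\diff[],T}(\IT u,\uv[T]).
\]

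The two remaining terms are estimated separately. The boundary sum is controlled by a weighted Cauchy--Schwarz: using $|\diff\GRAD(\dproj{k+1}u-u)\SCAL\normal_{TF}|\le\sdiff[TF]^{\nicefrac12}|\diff^{\nicefrac12}\GRAD(\dproj{k+1}u-u)|$ and distributing the weight $(\sdiff[TF]/d_{TF})^{\nicefrac12}$ between the two factors of the face pairing, each face term is bounded by $d_{TF}^{\nicefrac12}\norm[F]{\diff^{\nicefrac12}\GRAD(\dproj{k+1}u-u)}\cdot(\sdiff[TF]/d_{TF})^{\nicefrac12}\norm[F]{v_F-v_T}$; summing over $F\in\Fh[T]$ and applying \eqref{eq:dproj.approx.F} (with $\ell=k+1$, $s=r+2$) to the first factor produces $\lesssim\utdiff^{\nicefrac12}h_T^{r+1}\seminorm[H^{r+2}(T)]{u}\,\norm[1,{\diff[]},T]{\uv[T]}$. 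For the stabilisation, Cauchy--Schwarz gives $|\mathrm{s}_{\diff[],T}(\IT u,\uv[T])|\le\mathrm{s}_{\diff[],T}(\IT u,\IT u)^{\nicefrac12}\norm[1,{\diff[]},T]{\uv[T]}$ (via \eqref{eq:equiv.norms}), and $\mathrm{s}_{\diff[],T}(\IT u,\IT u)^{\nicefrac12}$ is estimated by transporting to $\tT$ using \eqref{eq:trans.sT}, \eqref{eq:trans.proj} and \eqref{eq:trans.dpT}, invoking the classical HHO stabilisation bound on the isotropic element $\tT$ for the tensor $\tdiff$ (via $\dptT\ItT\trans{u}=\tdproj{k+1}\trans{u}$, discrete trace inequalities, and the approximation estimates \eqref{eq:tdproj.approx}--\eqref{eq:tdproj.approx.F}), and transporting back through \eqref{eq:est.Hs.transv} and $h_{\tT}\le\varrho^{-1}h_T$ to obtain $\lesssim\utdiff^{\nicefrac12}\ar{\diff[],\map[],\tT}^{\nicefrac12}h_T^{r+1}\seminorm[H^{r+2}(T)]{u}$. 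The main technical obstacle is precisely the emergence of $\ar{\diff[],\map[],\tT}$ in this stabilisation bound: inside $\mathrm{s}_{\diff[],\map[],\tT}$, the face residuals $(\delta_{\diff[],\map[],\tT\tF}^k-\delta_{\diff[],\map[],\tT}^k)\ItT\trans{u}$ appear with the weight $\stdiff[\tT\tF]/h_{\tF}$, while the approximation bound \eqref{eq:tdproj.approx.F} is $\tdiff^{\nicefrac12}$-weighted; the passage between the two forces a factor $\stdiff[\tT\tF]/\ltdiff\le\ar{\diff[],\map[],\tT}$, which combined with the $\utdiff$ already present in \eqref{eq:tdproj.approx.F} produces the constant $\utdiff\ar{\diff[],\map[],\tT}$ in \eqref{eq:error}. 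Summing elementwise by discrete Cauchy--Schwarz over $\Th$ completes the proof.
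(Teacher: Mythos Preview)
Your proof is correct and follows the same route as the paper's: Third-Strang reduction, the consistency decomposition into a face-flux residual and a stabilisation residual (which the paper imports directly from \cite[Lemma~3.15]{hho-book} rather than re-deriving), Cauchy--Schwarz with the $d_{TF}$-weighting together with \eqref{eq:dproj.approx.F} for the first term, and transport via \eqref{eq:trans.sT}, \eqref{eq:trans.proj} plus the isotropic stabilisation consistency \cite[Lemma~3.10]{hho-book} and \eqref{eq:est.Hs.transv} for the second. The only cosmetic point is that your displayed identity for $\mathcal{E}_T(\uv[T])$ holds only after summation over $T\in\Th$ (as you yourself indicate), since the cancellation of $\sum_{F}(v_F,\diff\GRAD u\SCAL\normal_{TF})_F$ is global.
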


\begin{remark}[Optimality of the error estimate]
Following Remark \ref{rem:optimality.dproj}, Estimate \eqref{eq:error} is as optimal with respect to the mesh skewness as the corresponding estimate \cite[Theorem 3.18]{hho-book}, for isotropic meshes, is optimal with respect to the diffusion tensor.
\end{remark}

\begin{remark}[Interplay between mesh skewness and diffusion anisotropy]\label{rem:interplay}
Assume for simplicity that $d=2$ and that, for any $T\in\Th$, there is an orthonormal basis in which
\begin{equation}\label{eq:form.diff.map}
\diff = \left[\begin{array}{cc} \lambda_T & 0 \\ 0 & 1 \end{array}\right]\quad\mbox{ and }\quad
\map=\left[\begin{array}{cc} a_T & 0 \\ 0 & b_T \end{array}\right].
\end{equation}
Then $\tdiff$ is diagonal with coefficients $a_T^2 \lambda_T$ and $b_T^2$, and \eqref{eq:error} leads to the estimate
\begin{multline}\label{eq:error.global}
\norm[\mathrm{a},{\diff[]},h]{\Ih u-\uu[h]}\\
\lesssim \max_{T\in\Th}\left[\max(a_T\lambda_T^{\nicefrac12},b_T)
\max\left(\frac{a_T\lambda_T^{\nicefrac12}}{b_T},\frac{b_T}{a_T\lambda_T^{\nicefrac12}}\right)\right]
h^{r+1}\seminorm[H^{r+2}(\Th)]{u},
\end{multline}
where $\seminorm[H^{r+2}(\Th)]{u}$ is the usual broken $H^{r+2}$-seminorm of $u$. The first term in the right-hand side of \eqref{eq:error.global} encodes the interaction between the skewness of the mesh elements and the local anisotropy of the diffusion tensor.
\end{remark}

\begin{proof}[Theorem \ref{th:error.est}]
Applying the 3rd Strang lemma \cite{Di-Pietro.Droniou:18}, we have
\begin{equation}\label{est.a}
\norm[\mathrm{a},{\diff[]},h]{\Ih u-\uu[h]}\le \sup_{\uv[h]\in\UhD,\,\norm[\mathrm{a},{\diff[]},h]{\uv[h]}\le 1}\mathcal E_{\diff[],h}(u;\uv[h]),
\end{equation}
where $\mathcal E_{\diff[],h}(u;\uv[h])\coloneq\sum_{T\in\Th}(f,v_T)_T - \mathrm{a}_{\diff[],h}(\Ih u,\uv[h])$. The following relation is established in the proof of \cite[Lemma 3.15]{hho-book}:
\begin{equation}
\begin{aligned}
	\mathcal E_{\diff[],h}(u;\uv[h])={}&\sum_{T\in\Th}\sum_{F\in\Fh[T]}(\diff\GRAD(u-\dproj{k+1}u)\cdot\normal_{TF},v_F-v_T)_F\\
&-\sum_{T\in\Th}\mathrm{s}_{\diff[],T}(\IT u,\uv[T])
=\term_1+\term_2.
\end{aligned}
\label{est.E}
\end{equation}
Let $\uv[h]\in\UhD$ be such that $\norm[\mathrm{a},{\diff[]},h]{\uv[h]}\le 1$. Writing $\diff\GRAD(u-\dproj{k+1}u)\cdot\normal_{TF}=\diff^{\nicefrac12}\GRAD(u-\dproj{k+1}u)\cdot\diff^{\nicefrac12}\normal_{TF}$, using Cauchy--Schwarz inequalities and $|\diff^{\nicefrac12}\normal_{TF}|=\sdiff[TF]^{\nicefrac12}$, we have
\begin{align}
|\term_1|\le{}& \sum_{T\in\Th}\sum_{F\in\Fh[T]}\sdiff[TF]^{\nicefrac12}\norm[F]{\diff^{\nicefrac12}\GRAD(u-\dproj{k+1}u)}\norm[F]{v_F-v_T}\nonumber\\
\le{}&\left(\sum_{T\in\Th}\sum_{F\in\Fh[T]}d_{TF}\norm[F]{\diff^{\nicefrac12}\GRAD(u-\dproj{k+1}u)}^2\right)^{\nicefrac12}
\left(\sum_{T\in\Th}\sum_{F\in\Fh[T]}\frac{\sdiff[TF]}{d_{TF}}\norm[F]{v_F-v_T}^2\right)^{\nicefrac12}\nonumber\\
\lesssim{}&\left(\sum_{T\in\Th}\utdiff h_T^{2(r+1)}\seminorm[H^{r+2}(T)]{u}^2
\right)^{\nicefrac12},
\label{est:T1}
\end{align}
where we have used \eqref{eq:dproj.approx.F} (with $\ell=k+1$ and $s=r+2$) and the norm equivalence \eqref{eq:equiv.norms} to write $\sum_{T\in\Th}\sum_{F\in\Fh[T]}\frac{\sdiff[TF]}{d_{TF}}\norm[F]{v_F-v_T}^2\lesssim \sum_{T\in\Th}\mathrm{a}_{\diff[],T}(\uv[T],\uv[T])=\mathrm{a}_{\diff[],h}(\uv[h],\uv[h])\le 1$.
To estimate $\term_2$, we also use Cauchy--Schwarz inequalities, the bound $\sum_{T\in\Th}\mathrm{s}_{\diff[],T}(\uv[T],\uv[T])\le \sum_{T\in\Th}\mathrm{a}_{\diff[],T}(\uv[T],\uv[T])\le 1$
and the transport relation \eqref{eq:trans.sT} to write
\[
|\term_2|\le\left(\sum_{T\in\Th}\mathrm{s}_{\diff[],T}(\IT u,\IT u)\right)^{\nicefrac12}\lesssim\left(\sum_{T\in\Th}\Jmap^{-1}\mathrm{s}_{\diff[],\map[],\tT}(\trans{\IT u},\trans{\IT u})\right)^{\nicefrac12}.
\]
Since $\tT$ is isotropic and $\trans{\IT u}=\ItT\trans{u}$ (owing to \eqref{eq:trans.proj}), the consistency properties \cite[Lemma 3.10]{hho-book} of $\mathrm{s}_{\diff[],\map[],\tT}$ and the relations $h_{\tT}\lesssim h_T$ and \eqref{eq:est.Hs.transv} yield
\begin{align*}
|\term_2|\lesssim{}&\left(\sum_{T\in\Th}\Jmap^{-1}\utdiff[\tT]\ar{\diff[],\map[],\tT}h_{\tT}^{2(r+1)}\seminorm[H^{r+2}(\tT)]{\trans{u}}^2\right)^{\nicefrac12}\\
\lesssim{}&
\left(\sum_{T\in\Th}\utdiff[\tT]\ar{\diff[],\map[],\tT}h_{T}^{2(r+1)}\seminorm[H^{r+2}(T)]{u}^2\right)^{\nicefrac12}.
\end{align*}
Plug this estimate and \eqref{est:T1} into \eqref{est.E}, use $\ar{\diff[],\map[],\tT}\ge 1$ and recall \eqref{est.a} to conclude. \end{proof}

\section{Numerical evaluation of the effects of diffusion ani\-sotropy and mesh skewness}\label{sec:numerics}

We provide here a series of numerical results, on the domain $\Omega=(0,1)^2$ (and with non-homogeneous Dirichlet boundary conditions ---see \cite[Section 2.4]{hho-book} for the adaptation of the scheme \eqref{eq:hho} to this case), to assess the practical optimality of the error estimate \eqref{eq:error} and its consequence \eqref{eq:error.global} in cases of highly anisotropic diffusion tensor and/or skewed mesh families.
The accuracy of the HHO scheme is measured through the following two relative errors:
\[
E_{\mathrm{a},\diff[],h}\coloneq \frac{\norm[\mathrm{a},{\diff[]},h]{\Ih u-\uu[h]}}{\norm[\mathrm{a},{\diff[]},h]{\Ih u}}\quad\mbox{ and }\quad
E_{1,h}\coloneq \frac{\norm[1,h]{\Ih u-\uu[h]}}{\norm[1,h]{\Ih u}}\,,
\]
where $\norm[\mathrm{a},{\diff[]},h]{{\cdot}}$ is defined by \eqref{eq:norm.aKh}, and $\norm[1,h]{{\cdot}}$ is the diffusion-independent discrete $H^1$-norm obtained adding together the local seminorms \eqref{norm.H1KT} with $\diff[]=\Id$, that is:
\begin{equation}\label{norm.H1}
\norm[1,h]{\uv[h]}\coloneq\left(\sum_{T\in\Th}\Big[\norm[T]{\GRAD v_T}^2+\sum_{F\in\Fh[T]}d_{TF}^{-1}\norm[F]{v_F-v_T}^2\Big]\right)^{\nicefrac12}.
\end{equation}

The numerical tests have been performed using the code ``\texttt{HHO-Diffusion}'' in the C++ open source library \texttt{HArDCore} \cite{HArDCore}. This library provides generic tools for implementing 2D and 3D numerical methods with unknowns made of polynomials on the edges/faces and cells of the mesh; it also naturally handles generic polygonal and polyhedral meshes. The variety of possible tests to assess the practical efficiency of the scheme \eqref{eq:hho} with anisotropic diffusion/skewed meshes is infinite, given the numerous possible parameters (polynomial degrees $k$, diffusion tensors, exact solutions, type of meshes, etc). We only report a few relevant results here, but all the meshes and data used in the tests below are available in \texttt{HArDCore} for the interested reader to run additional tests.

\subsection{Test A: anisotropic diffusion tensor}

This test focuses on the effect of an anisotropic and heterogeneous diffusion tensor. For $\lambda\in\{10^{-6},1,10^6\}$, we consider the tensor
\[
\diff[](x,y)=
\left[\begin{array}{cc}\lambda&0\\0&1\end{array}\right]\quad\mbox{ if $y<0.5$},\qquad
\diff[](x,y)=\Id\quad\mbox{ if $y\ge 0.5$},
\]
and fix the exact solution $u(x,y)=\cos(\pi x)\cos(\pi y)$; the source term and boundary conditions are computed from this solution. Since $(\partial_x u)_{|y=0.5}=0$, we still have $\DIV(\diff[]\GRAD u)\in L^2(\Omega)$ despite the discontinuity of $\diff[]$ along $y=0.5$. We consider a family of locally refined meshes from the FVCA5 benchmark \cite{Herbin.Hubert:08} (see Fig. \ref{fig:mesh.testA}), for which the setting of Remark \ref{rem:interplay} holds with $\lambda_T=\lambda$ and $a_T=b_T=1$; the estimate \eqref{eq:error.global} therefore predicts a dependency of the energy error on $\max(\lambda,\lambda^{-\frac12})$. The results for $k=1,3$ are presented in Fig. \ref{fig:A.locref}; tests with other polynomial degrees present the same trend. The energy error $E_{\mathrm{a},\diff[],h}$ appears to depend much less on the anisotropy ratio than predicted; the error $E_{1,h}$ shows a more pronounced dependency on the tensor anisotropy, especially for low degrees where a factor of about 30 is seen on the finest mesh between $\lambda=1$ and $\lambda=10^{-6}, 10^6$.

\begin{figure}
\begin{center}
\begin{tabular}{cc}
\includegraphics[width=0.35\linewidth]{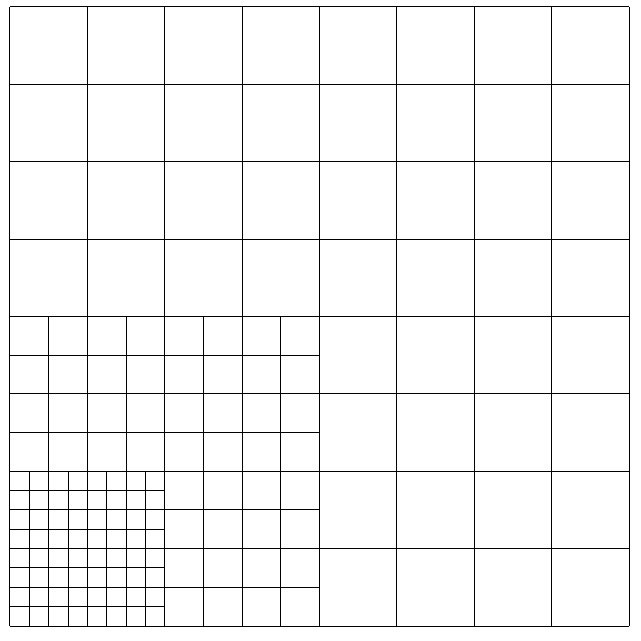} & 
\includegraphics[width=0.35\linewidth]{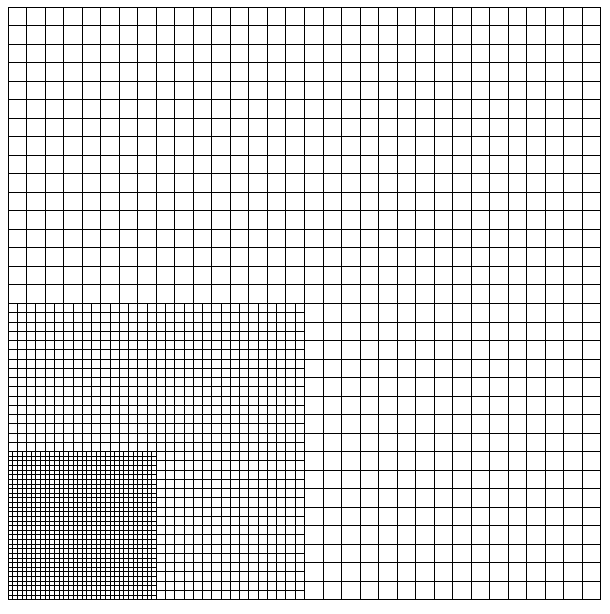} 
\end{tabular}
\caption{Two members of the family of meshes used in Test A \label{fig:mesh.testA}.}
\end{center}
\end{figure}

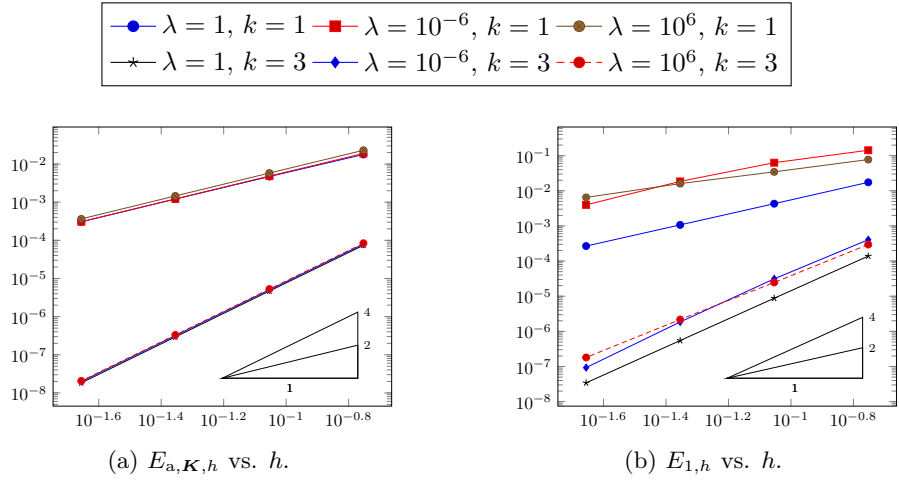
\begin{figure}\centering
  \ref{conv.testA.2}
  \vspace{0.5cm}\\
  \begin{minipage}[b]{0.45\linewidth}
    \centering
    \begin{tikzpicture}[scale=0.65]
      \begin{loglogaxis}[ legend columns=3, legend to name=conv.testA.2 ]
        \addplot table[x=meshsize,y=EnergyError] {dat/testA/locref/k1.lambda.1.dat};
        \addplot table[x=meshsize,y=EnergyError] {dat/testA/locref/k1.lambda.1e-6.dat};
        \addplot table[x=meshsize,y=EnergyError] {dat/testA/locref/k1.lambda.1e6.dat};
        \addplot table[x=meshsize,y=EnergyError] {dat/testA/locref/k3.lambda.1.dat};
        \addplot table[x=meshsize,y=EnergyError] {dat/testA/locref/k3.lambda.1e-6.dat};
        \addplot table[x=meshsize,y=EnergyError] {dat/testA/locref/k3.lambda.1e6.dat};
        \logLogSlopeTriangle{0.90}{0.4}{0.1}{2}{black};
        \logLogSlopeTriangle{0.90}{0.4}{0.1}{4}{black};
        \addlegendentry{$\lambda=1$, $k=1$}
				\addlegendentry{$\lambda=10^{-6}$, $k=1$}
				\addlegendentry{$\lambda=10^{6}$, $k=1$}
				\addlegendentry{$\lambda=1$, $k=3$}
				\addlegendentry{$\lambda=10^{-6}$, $k=3$}
				\addlegendentry{$\lambda=10^{6}$, $k=3$}
      \end{loglogaxis}
    \end{tikzpicture}
    \subcaption{$E_{\mathrm{a},{\diff[]},h}$ vs. $h$.\label{fig:A.locref.energy}}
  \end{minipage}
  \begin{minipage}[b]{0.45\linewidth}
    \centering
    \begin{tikzpicture}[scale=0.65]
      \begin{loglogaxis}[ legend columns=3, legend to name=conv.testA.2 ]
        \addplot table[x=meshsize,y=H1error] {dat/testA/locref/k1.lambda.1.dat};
        \addplot table[x=meshsize,y=H1error] {dat/testA/locref/k1.lambda.1e-6.dat};
        \addplot table[x=meshsize,y=H1error] {dat/testA/locref/k1.lambda.1e6.dat};
        \addplot table[x=meshsize,y=H1error] {dat/testA/locref/k3.lambda.1.dat};
        \addplot table[x=meshsize,y=H1error] {dat/testA/locref/k3.lambda.1e-6.dat};
        \addplot table[x=meshsize,y=H1error] {dat/testA/locref/k3.lambda.1e6.dat};
        \logLogSlopeTriangle{0.90}{0.4}{0.1}{2}{black};
        \logLogSlopeTriangle{0.90}{0.4}{0.1}{4}{black};
        \addlegendentry{$\lambda=1$, $k=1$}
				\addlegendentry{$\lambda=10^{-6}$, $k=1$}
				\addlegendentry{$\lambda=10^{6}$, $k=1$}
				\addlegendentry{$\lambda=1$, $k=3$}
				\addlegendentry{$\lambda=10^{-6}$, $k=3$}
				\addlegendentry{$\lambda=10^{6}$, $k=3$}
      \end{loglogaxis}
    \end{tikzpicture}
    \subcaption{$E_{1,h}$ vs. $h$.\label{fig:A.locref.H1}}
  \end{minipage}
\caption{Errors vs. meshsize for Test A. Slopes = rates expected from \eqref{eq:error}.\label{fig:A.locref}}
\end{figure}

\subsection{Test B: skewed mesh}

In this test, we study the impact of the mesh skewness. We take $\diff[]=\Id$ and the exact solution $u(x,y)=\cos(\pi x)\cos(\pi y)$. The meshes are (mostly) hexagonal, and more and more skewed as their size decreases (see Fig. \ref{fig:mesh.hexa}). The results in Fig. \ref{fig:B.hexa} show a clear loss of rate of convergence, compared to the expected rate for isotropic meshes.

\begin{figure}
\begin{center}
\begin{tabular}{cc}
\includegraphics[width=0.35\linewidth]{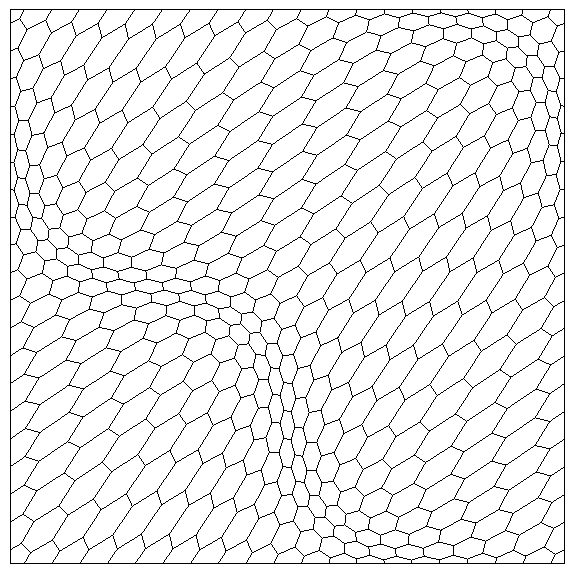}&
\includegraphics[width=0.35\linewidth]{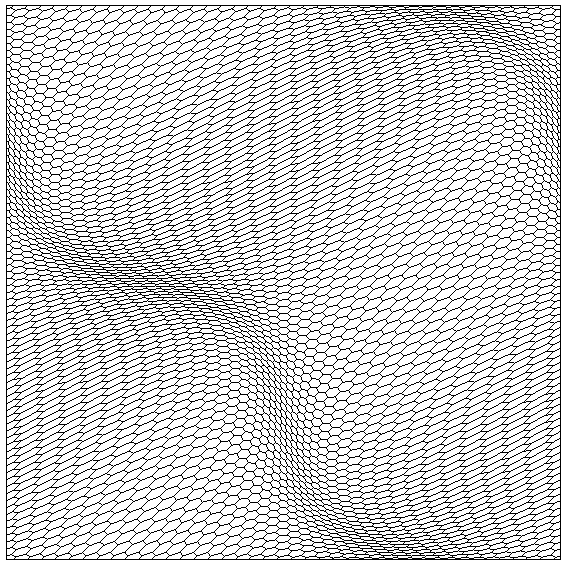}
\end{tabular}
\caption{First two meshes in the skewed family used in Test B. \label{fig:mesh.hexa}}
\end{center}
\end{figure}

\begin{figure}\centering
  \ref{conv.testB}
  \vspace{0.5cm}\\
  \begin{minipage}[b]{0.45\linewidth}
    \centering
    \begin{tikzpicture}[scale=0.65]
      \begin{loglogaxis}[ legend columns=-1, legend to name=conv.testB ]
        \addplot table[x=meshsize,y=EnergyError] {dat/testB/hexa/k0.hexa_anisotropic.dat};
        \addplot table[x=meshsize,y=EnergyError] {dat/testB/hexa/k1.hexa_anisotropic.dat};
        \addplot table[x=meshsize,y=EnergyError] {dat/testB/hexa/k2.hexa_anisotropic.dat};
        \addplot table[x=meshsize,y=EnergyError] {dat/testB/hexa/k3.hexa_anisotropic.dat};
        \logLogSlopeTriangle{0.90}{0.4}{0.1}{1}{black};
        \logLogSlopeTriangle{0.90}{0.4}{0.1}{2}{black};
        \logLogSlopeTriangle{0.90}{0.4}{0.1}{3}{black};
        \logLogSlopeTriangle{0.90}{0.4}{0.1}{4}{black};
        \legend{$k=0$,$k=1$,$k=2$,$k=3$};
      \end{loglogaxis}
    \end{tikzpicture}
    \subcaption{$E_{\mathrm{a},{\diff[]},h}$ vs. $h$.\label{fig:B.hexa_aniso}}
  \end{minipage}
  \begin{minipage}[b]{0.45\linewidth}
    \centering
    \begin{tikzpicture}[scale=0.65]
      \begin{loglogaxis}[ legend columns=-1, legend to name=conv.testB ]
        \addplot table[x=meshsize,y=H1error] {dat/testB/hexa/k0.hexa_anisotropic.dat};
        \addplot table[x=meshsize,y=H1error] {dat/testB/hexa/k1.hexa_anisotropic.dat};
        \addplot table[x=meshsize,y=H1error] {dat/testB/hexa/k2.hexa_anisotropic.dat};
        \addplot table[x=meshsize,y=H1error] {dat/testB/hexa/k3.hexa_anisotropic.dat};
        \logLogSlopeTriangle{0.90}{0.4}{0.1}{1}{black};
        \logLogSlopeTriangle{0.90}{0.4}{0.1}{2}{black};
        \logLogSlopeTriangle{0.90}{0.4}{0.1}{3}{black};
        \logLogSlopeTriangle{0.90}{0.4}{0.1}{4}{black};
        \legend{$k=0$,$k=1$,$k=2$,$k=3$};
      \end{loglogaxis}
    \end{tikzpicture}
    \subcaption{$E_{1,h}$ vs. $h$.\label{fig:B.hexa_aniso.H1}}
  \end{minipage}
\caption{Errors vs. meshsize for Test B. The slopes indicate the expected rates of convergence $h^{k+1}$, disregarding the effects of the mesh skewness.
\label{fig:B.hexa}}
\end{figure}
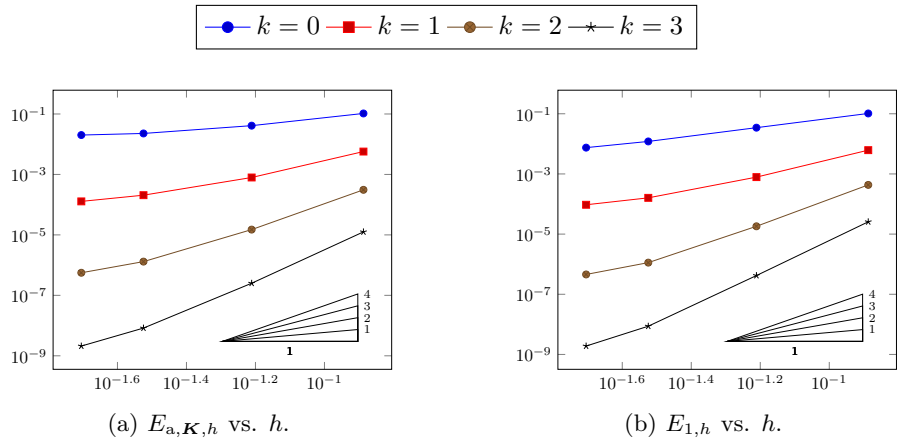

To estimate more precisely the effect of mesh skewness, we introduce the \emph{flatness} factor defined as $\fl_h\coloneq \max_{T\in\Th}\fl_T$ with $\fl_T\coloneq\frac{h_T}{\rho_T}$, where $\rho_T$ is the radius of the largest ball centred at the centre of mass of $T$ and contained in $T$. The skewness of the considered meshes comes from the large flatness factors $\fl_T$ of some elements $T$. It is easy to convince oneself that this setting is compatible with Remark \ref{rem:interplay} with $\lambda=1$, $a_T=\fl_T$ and $b_T=1$. As a consequence, \eqref{eq:error.global} predicts an upper bound
\begin{equation}\label{eq:rate.rd}
E_{\mathrm{a},\diff[],h}\lesssim \fl_h^2 h^{k+1}\seminorm[H^{k+2}(\Th)]{u}.
\end{equation}
To evaluate the accuracy of this estimate with respect to the mesh flatness, for each error $E_h\in\{E_{\mathrm{a},\diff[],h},E_{1,h}\}$ we provide in Table \ref{tab:rates.flatness} an evaluation of the rates of growth of $E_h/h^{k+1}$ with respect to $\fl_h$. Estimate \eqref{eq:rate.rd} tells us that, at least for the energy error, this rate should be at a maximum of 2.
As can be seen in Table \ref{tab:rates.flatness}, the actual rates are much smaller than 2, and both errors are less sensitive to the mesh flatness than \eqref{eq:rate.rd} predicts; the diffusion-independent norm $E_{1,h}$ is the least sensitive of both.

Table \ref{tab:rates.flatness} also reports the condition numbers (CN) in 1-norm for the statically condensed system. For regular mesh sequences, CNs of HHO systems grow as $h^{-2}$. Here, the growth is in $h^{-4}$ (but the CNs do not depend much on $k$). The additional power of $2$ could come from a factor $\fl^2$ (since, here, $\fl\sim 1/h$). Further analysis and tests are however necessary to reach a definitive conclusion, and it should also be noted that the meshes considered here contain a large portion of skewed elements; the condition numbers could be reduced for meshes with a smaller portion of distorted cells.

\begin{table}
\begin{center}
	\resizebox{\textwidth}{!}{
\begin{tabular}{c@{\hspace*{1em}}c}
\begin{tabular}{|c|c|c||c|c||c|c|}
\hline
$h$ & $\fl_h$  & CN &  $\frac{E_{\mathrm{a},{\diff[]},h}}{h^{k+1}}$ & rate & 
 $\frac{E_{1,h}}{h^{k+1}}$ & rate \\
\hline
0.13 & 10 & 875 & 8e-01 & -- & 7.9e-01 & -- \\
0.06 & 22 & 1.8e+04 &  6.7e-01 & -0.2 & 5.6e-01 & -0.5 \\ 
0.03 & 46 & 2.6e+05 &  7.6e-01 & 0.2 & 4.0e-01 & -0.4 \\ 
0.02 & 70 & 1.3e+06 &  1e+00 & 0.7 & 3.8e-01 & -0.1 \\
\hline
\end{tabular}
&
\begin{tabular}{|c|c|c||c|c||c|c|}
\hline
$h$ & $\fl_h$ & CN &  $\frac{E_{\mathrm{a},{\diff[]},h}}{h^{k+1}}$ & rate & 
 $\frac{E_{1,h}}{h^{k+1}}$ & rate \\
\hline
0.13 & 10 & 1.7e+03 & 3.4e-01 & -- & 3.7e-01 & -- \\
0.06 & 22 & 3.1e+04 &  2.1e-01 & -0.6 & 2.1e-01 & -0.7 \\ 
0.03 & 46 & 5.0e+05 &  2.3e-01 & 0.1 & 1.8e-01 & -0.2 \\ 
0.02 & 70 & 2.6e+06 &  3.3e-01 & 0.8 & 2.4e-01 & 0.7 \\
\hline
\end{tabular}\\[2.8em]
$k=0$ & $k=1$ \\[1em]
\begin{tabular}{|c|c|c||c|c||c|c|}
\hline
$h$ & $\fl_h$ & CN &  $\frac{E_{\mathrm{a},{\diff[]},h}}{h^{k+1}}$ & rate & 
 $\frac{E_{1,h}}{h^{k+1}}$ & rate \\
\hline
0.13 & 10 & 2.7e+03 & 1.4e-01 & -- & 2.0e-01 & -- \\
0.06 & 22 & 4.3e+04 &  6.4e-02 & -1 & 7.8e-02 & -1.2 \\ 
0.03 & 46 & 7.7e+05 &  4.9e-02 & -0.4 & 4.2e-02 & -0.8 \\ 
0.02 & 70 & 4.0e+06 &  7.3e-02 & 0.9 & 5.9e-02 & 0.8 \\ 
\hline
\end{tabular}
&
\begin{tabular}{|c|c|c||c|c||c|c|}
\hline
$h$ & $\fl_h$ & CN &  $\frac{E_{\mathrm{a},{\diff[]},h}}{h^{k+1}}$ & rate & 
 $\frac{E_{1,h}}{h^{k+1}}$ & rate \\
\hline
0.13 & 10 & 3.9e+03 & 4.4e-02 & -- & 9.1e-02 & -- \\
0.06 & 22 & 5.6e+04 &  1.8e-02 & -1.2 & 2.9e-02 & -1.5 \\ 
0.03 & 46 & 1.1e+06 &  1.0e-02 & -0.7 & 1.1e-02 & -1.3 \\ 
0.02 & 70 & 5.6e+06 &  1.4e-02 & 0.7 & 1.3e-02 & 0.3 \\
\hline
\end{tabular}\\[2.8em]
$k=2$ & $k=3$
\end{tabular}
}
\caption{Rates of convergence of the errors with respect to the mesh flatness, Test B.
\label{tab:rates.flatness}}
\end{center}
\end{table}

\subsection{Test C}

We assess here the interplay between mesh skewness and diffusion anisotropy, taking $\diff[](x,y)=\mathrm{diag}(10^6,1)$, and $u(x,y)=\cos(\pi x)\cos(\pi y)$ as before. We consider two families of meshes: regular hexagonal, and skewed hexagonal with flatness factor multiplied by two from one mesh member to the next; see Fig. \ref{fig:hexa_meshes}.

\begin{figure}
\begin{center}
\begin{tabular}{c@{\qquad}c}
\adjincludegraphics[width=0.3\linewidth, trim={0 {.9\width} {.8\width} 0}, clip]{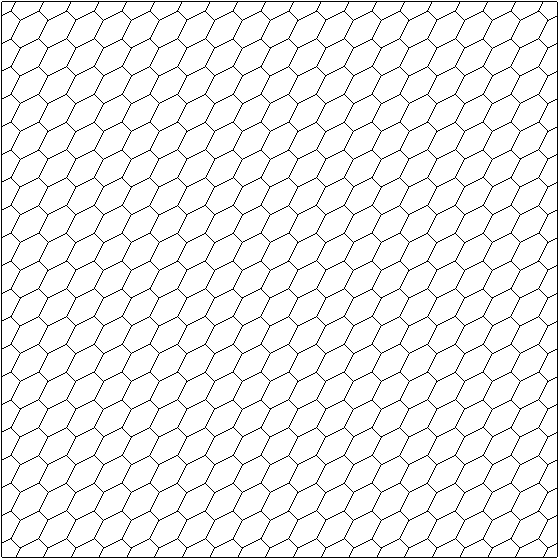}&
\adjincludegraphics[width=0.3\linewidth, trim={0 {.9\width} {.8\width} 0}, clip]{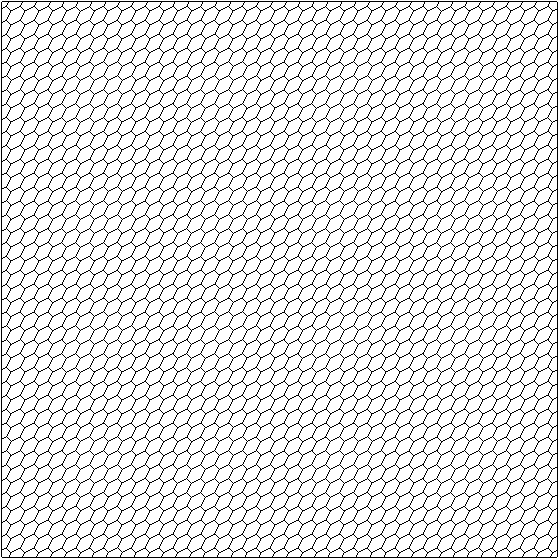}\\[1em]
\adjincludegraphics[width=0.3\linewidth, trim={0 {.9\width} {.8\width} 0}, clip]{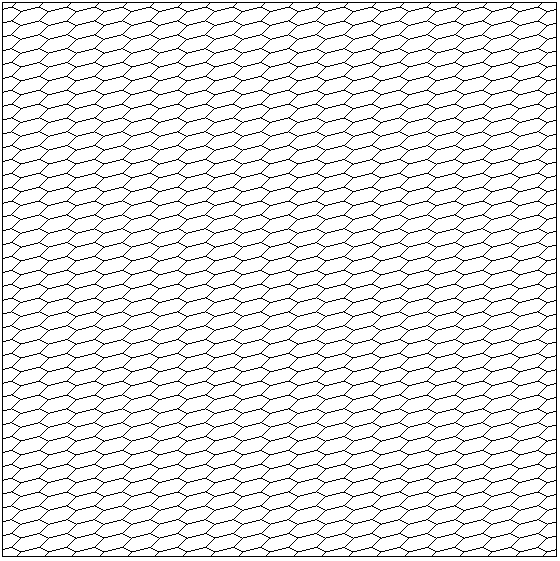}&
\adjincludegraphics[width=0.3\linewidth, trim={0 {.9\width} {.8\width} 0}, clip]{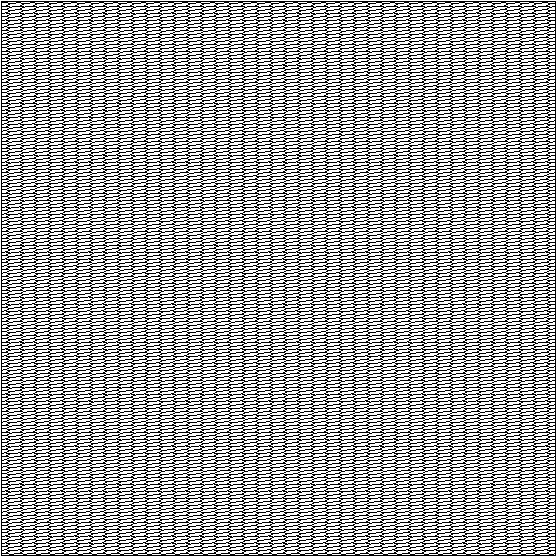}
\end{tabular}
\caption{Upper left corner of the meshes in Test C: regular hexagonal (top); skewed hexagonal (bottom). \label{fig:hexa_meshes}}
\end{center}
\end{figure}

The setting of Remark \ref{rem:interplay} is valid with $(a_T,b_T)=(1,\fl_h)$ with $\fl_h\le 10^3$ for the considered meshes; \eqref{eq:error.global} thus predicts a bound $E_{\mathrm{a},\diff[],h}\lesssim 10^6 \fl_h^{-1}h^{k+1}\seminorm[H^{k+2}(\Omega)]{u}$.
For the skewed meshes, we have $\fl_h\sim 1/h$ and we therefore expect a better rate of convergence than the usual $h^{k+1}$ rate for isotropic meshes. Fig. \ref{fig:C.hexa_skewed} confirms this improvement, albeit in a non-uniform way.

\begin{figure}\centering
  \ref{conv.testC}
  \vspace{0.5cm}\\
  \begin{minipage}[b]{0.45\linewidth}
    \centering
    \begin{tikzpicture}[scale=0.65]
      \begin{loglogaxis}[ legend columns=-1, legend to name=conv.testC ]
        \addplot table[x=meshsize,y=EnergyError] {dat/testC/k0.hexa_straight-skewed.dat};
        \addplot table[x=meshsize,y=EnergyError] {dat/testC/k1.hexa_straight-skewed.dat};
        \addplot table[x=meshsize,y=EnergyError] {dat/testC/k2.hexa_straight-skewed.dat};
        \addplot table[x=meshsize,y=EnergyError] {dat/testC/k3.hexa_straight-skewed.dat};
        \logLogSlopeTriangle{0.90}{0.4}{0.1}{1}{black};
        \logLogSlopeTriangle{0.90}{0.4}{0.1}{2}{black};
        \logLogSlopeTriangle{0.90}{0.4}{0.1}{3}{black};
        \logLogSlopeTriangle{0.90}{0.4}{0.1}{4}{black};
        \legend{$k=0$,$k=1$,$k=2$,$k=3$};
      \end{loglogaxis}
    \end{tikzpicture}
    \subcaption{$E_{\mathrm{a},{\diff[]},h}$ vs. $h$.\label{fig:C.hexa_skewed.energy}}
  \end{minipage}
  \begin{minipage}[b]{0.45\linewidth}
    \centering
    \begin{tikzpicture}[scale=0.65]
      \begin{loglogaxis}[ legend columns=-1, legend to name=conv.testC ]
        \addplot table[x=meshsize,y=H1error] {dat/testC/k0.hexa_straight-skewed.dat};
        \addplot table[x=meshsize,y=H1error] {dat/testC/k1.hexa_straight-skewed.dat};
        \addplot table[x=meshsize,y=H1error] {dat/testC/k2.hexa_straight-skewed.dat};
        \addplot table[x=meshsize,y=H1error] {dat/testC/k3.hexa_straight-skewed.dat};
        \logLogSlopeTriangle{0.90}{0.4}{0.1}{1}{black};
        \logLogSlopeTriangle{0.90}{0.4}{0.1}{2}{black};
        \logLogSlopeTriangle{0.90}{0.4}{0.1}{3}{black};
        \logLogSlopeTriangle{0.90}{0.4}{0.1}{4}{black};
        \legend{$k=0$,$k=1$,$k=2$,$k=3$};
      \end{loglogaxis}
    \end{tikzpicture}
    \subcaption{$E_{1,h}$ vs. $h$.\label{fig:C.hexa_skewed.H1}}
  \end{minipage}
\caption{Errors in Test C for the family of skewed hexagonal meshes. The slopes correspond to the $h^{k+1}$ rates expected for non-skewed meshes.
\label{fig:C.hexa_skewed}}
\end{figure}
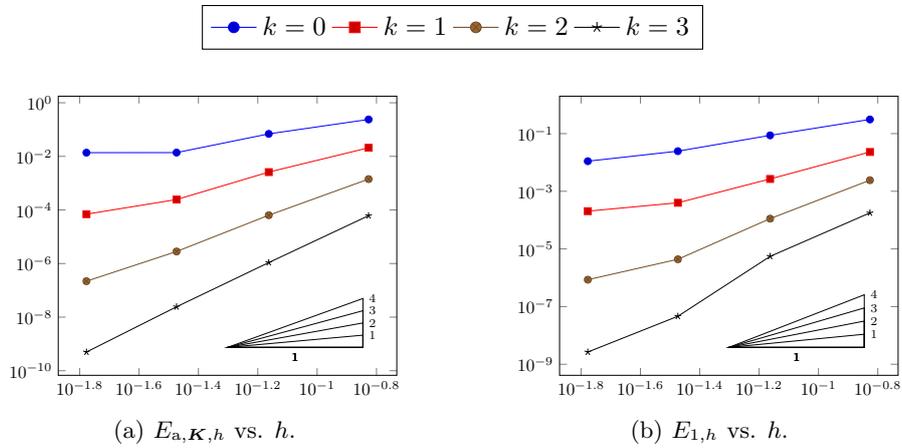

The improvement is clearer if we superimpose the errors for the families of regular and skewed meshes, see Figs. \ref{fig:C.hexa_comp.h.k0k1} and \ref{fig:C.hexa_comp.h.k2k3}. For a given meshsize, selecting a mesh that is stretched in the direction of strong diffusion improves the convergence in both norms; this gain is valid for all degrees, but especially prominent for the lowest-order case $k=0$ (for which, at the considered meshsizes, there is no apparent convergence on non-stretched meshes). In Figs. \ref{fig:C.hexa_comp.dofs.k0k1} and \ref{fig:C.hexa_comp.dofs.k2k3} the same errors are plotted against the number of globally coupled degrees of freedom, which for HHO schemes correspond to the edge unknowns (the element unknowns can be eliminated by static condensation \cite[Appendix B]{hho-book}). In terms of errors vs. number of degrees of freedom, the gain in using skewed meshes is less clear, except for $k=0$; the reason is that meshes entirely made of stretched elements usually have, for a given meshsize, more edges than regular meshes.

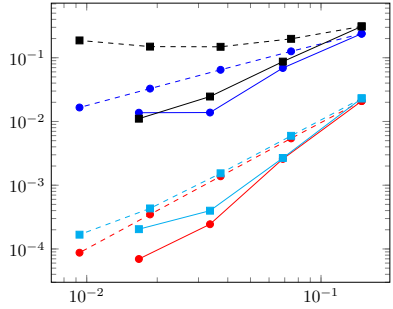
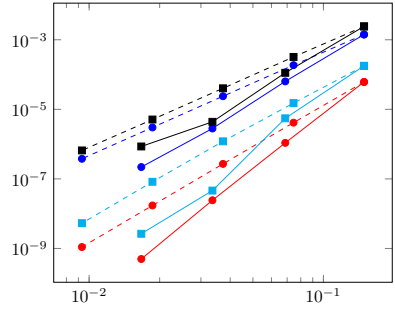
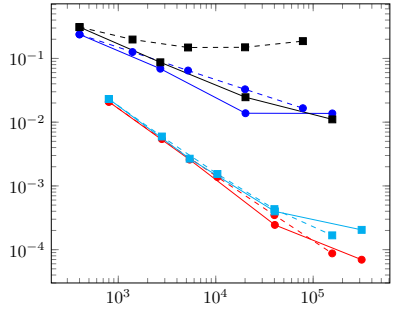
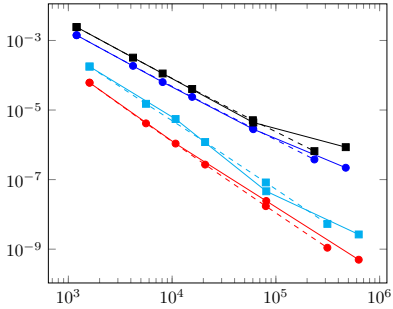
\begin{figure}\centering
  \begin{minipage}[b]{0.45\linewidth}
    \centering
    \begin{tikzpicture}[scale=0.65]
      \begin{loglogaxis}[ legend columns=-1, legend to name=conv.testC.2 ]
        \addplot [mark=*, blue] table[x=meshsize,y=EnergyError] {dat/testC/k0.hexa_straight-skewed.dat};
        \addplot [mark=*, mark options=solid, blue, dashed] table[x=meshsize,y=EnergyError] {dat/testC/k0.hexa_straight-regular.dat};
        \addplot [mark=square*, black] table[x=meshsize,y=H1error] {dat/testC/k0.hexa_straight-skewed.dat};
        \addplot [mark=square*, mark options=solid, black, dashed] table[x=meshsize,y=H1error] {dat/testC/k0.hexa_straight-regular.dat};
        \addplot [mark=*, red] table[x=meshsize,y=EnergyError] {dat/testC/k1.hexa_straight-skewed.dat};
        \addplot [mark=*, mark options=solid, red, dashed] table[x=meshsize,y=EnergyError] {dat/testC/k1.hexa_straight-regular.dat};
        \addplot [mark=square*, cyan] table[x=meshsize,y=H1error] {dat/testC/k1.hexa_straight-skewed.dat};
        \addplot [mark=square*, mark options=solid, cyan, dashed] table[x=meshsize,y=H1error] {dat/testC/k1.hexa_straight-regular.dat};
      \end{loglogaxis}
    \end{tikzpicture}
    \subcaption{Errors vs. $h$ for $k=0$ (top four plots) and $k=1$ (bottom four plots).\label{fig:C.hexa_comp.h.k0k1}}
  \end{minipage}
  \hspace*{1em}
  \begin{minipage}[b]{0.45\linewidth}
    \centering
    \begin{tikzpicture}[scale=0.65]
      \begin{loglogaxis}[ legend columns=-1, legend to name=conv.testC.2 ]
        \addplot [mark=*, blue] table[x=meshsize,y=EnergyError] {dat/testC/k2.hexa_straight-skewed.dat};
        \addplot [mark=*, mark options=solid, blue, dashed] table[x=meshsize,y=EnergyError] {dat/testC/k2.hexa_straight-regular.dat};
        \addplot [mark=square*, black] table[x=meshsize,y=H1error] {dat/testC/k2.hexa_straight-skewed.dat};
        \addplot [mark=square*, mark options=solid, black, dashed] table[x=meshsize,y=H1error] {dat/testC/k2.hexa_straight-regular.dat};
        \addplot [mark=*, red] table[x=meshsize,y=EnergyError] {dat/testC/k3.hexa_straight-skewed.dat};
        \addplot [mark=*, mark options=solid, red, dashed] table[x=meshsize,y=EnergyError] {dat/testC/k3.hexa_straight-regular.dat};
        \addplot [mark=square*, cyan] table[x=meshsize,y=H1error] {dat/testC/k3.hexa_straight-skewed.dat};
        \addplot [mark=square*, mark options=solid, cyan, dashed] table[x=meshsize,y=H1error] {dat/testC/k3.hexa_straight-regular.dat};
      \end{loglogaxis}
    \end{tikzpicture}
    \subcaption{Errors vs. $h$ for $k=2$ (top four plots) and $k=3$ (bottom four plots).\label{fig:C.hexa_comp.h.k2k3}}
  \end{minipage}\\
  \vspace*{1em}
  \begin{minipage}[b]{0.45\linewidth}
    \centering
    \begin{tikzpicture}[scale=0.65]
      \begin{loglogaxis}[ legend columns=-1, legend to name=conv.testC.2 ]
        \addplot [mark=*, blue] table[x=NbEdgeDOFs,y=EnergyError] {dat/testC/k0.hexa_straight-skewed.dat};
        \addplot [mark=*, mark options=solid, blue, dashed] table[x=NbEdgeDOFs,y=EnergyError] {dat/testC/k0.hexa_straight-regular.dat};
        \addplot [mark=square*, black] table[x=NbEdgeDOFs,y=H1error] {dat/testC/k0.hexa_straight-skewed.dat};
        \addplot [mark=square*, mark options=solid, black, dashed] table[x=NbEdgeDOFs,y=H1error] {dat/testC/k0.hexa_straight-regular.dat};
        \addplot [mark=*, red] table[x=NbEdgeDOFs,y=EnergyError] {dat/testC/k1.hexa_straight-skewed.dat};
        \addplot [mark=*, mark options=solid, red, dashed] table[x=NbEdgeDOFs,y=EnergyError] {dat/testC/k1.hexa_straight-regular.dat};
        \addplot [mark=square*, cyan] table[x=NbEdgeDOFs,y=H1error] {dat/testC/k1.hexa_straight-skewed.dat};
        \addplot [mark=square*, mark options=solid, cyan, dashed] table[x=NbEdgeDOFs,y=H1error] {dat/testC/k1.hexa_straight-regular.dat};
      \end{loglogaxis}
    \end{tikzpicture}
    \subcaption{Errors vs. nb DOFs for $k=0$ (top four plots) and $k=1$ (bottom four plots).\label{fig:C.hexa_comp.dofs.k0k1}}
  \end{minipage}
  \hspace*{1em}
  \begin{minipage}[b]{0.45\linewidth}
    \centering
    \begin{tikzpicture}[scale=0.65]
      \begin{loglogaxis}[ legend columns=-1, legend to name=conv.testC.2 ]
        \addplot [mark=*, blue] table[x=NbEdgeDOFs,y=EnergyError] {dat/testC/k2.hexa_straight-skewed.dat};
        \addplot [mark=*, mark options=solid, blue, dashed] table[x=NbEdgeDOFs,y=EnergyError] {dat/testC/k2.hexa_straight-regular.dat};
        \addplot [mark=square*, black] table[x=NbEdgeDOFs,y=H1error] {dat/testC/k2.hexa_straight-skewed.dat};
        \addplot [mark=square*, mark options=solid, black, dashed] table[x=NbEdgeDOFs,y=H1error] {dat/testC/k2.hexa_straight-regular.dat};
        \addplot [mark=*, red] table[x=NbEdgeDOFs,y=EnergyError] {dat/testC/k3.hexa_straight-skewed.dat};
        \addplot [mark=*, mark options=solid, red, dashed] table[x=NbEdgeDOFs,y=EnergyError] {dat/testC/k3.hexa_straight-regular.dat};
        \addplot [mark=square*, cyan] table[x=NbEdgeDOFs,y=H1error] {dat/testC/k3.hexa_straight-skewed.dat};
        \addplot [mark=square*, mark options=solid, cyan, dashed] table[x=NbEdgeDOFs,y=H1error] {dat/testC/k3.hexa_straight-regular.dat};
      \end{loglogaxis}
    \end{tikzpicture}
    \subcaption{Errors vs. nb DOFs for $k=2$ (top four plots) and $k=3$ (bottom four plots).\label{fig:C.hexa_comp.dofs.k2k3}}
  \end{minipage}
\caption{Test C: comparison between regular (dashed lines) and skewed (continuous lines) hexagonal meshes. Round markers: $E_{\mathrm{a},\diff[],h}$; square markers: $E_{1,h}$.
\label{fig:C.hexa_comp}}
\end{figure}

\section{Conclusion}\label{sec:conclusion}

We presented a theoretical and numerical study of the accuracy and robustness of the classical HHO method, when applied to anisotropic diffusion equations on distorted meshes. We defined a notion of mesh sequences that accepts in particular elements that become more and more elongated as the mesh is refined, and we established an error estimate that tracks the dependency of the constants with respect to the local diffusion anisotropy and elements skewness. We then presented the results of several numerical tests designed to explore the optimality of the error estimate. These results indicate that some behaviours highlighted by the theoretical estimate (such as the interplay between diffusion anisotropy and mesh skewness) are perceptible in practical numerical results, but they also show that this estimate appears to be pessimistic in its prediction of the behaviour of the error in case of strong anisotropy or skewness.

Further work remains to be done to obtain more optimal estimates in terms of dependency with respect to the tensor anisotropy (this only has to be done for non-skewed meshes, as our approach would then provide an optimal estimate for skewed meshes). An aspect that is not covered by our definition of regular skewed 
mesh sequences is the case of small edges/faces in otherwise isotropic elements; another approach has to be adopted to derive error estimates in such situations. Finally, even though the standard HHO scheme displays some level of robustness on distorted meshes, it would be interesting to develop a variant that is specifically adapted to such meshes, and leads to better condition numbers than the standard method.

\medskip

\textbf{Acknowledgements}
This work was partially supported by the Australian Government through the Australian Research Council's Discovery Projects funding scheme (project number DP170100605).


\bibliographystyle{plain}
\bibliography{hho-aniso}

\begin{thebibliography}{10}

\bibitem{HArDCore}
{HArDCore} -- {H}ybrid {A}rbitrary {D}egree::{C}ore. {V}ersion 2.0.
  https://github.com/jdroniou/hardcore.

\bibitem{ABVW17}
Paola~F. Antonietti, Stefano Berrone, Marco Verani, and Steffen Wei\ss~er.
\newblock The virtual element method on anisotropic polygonal discretizations.
\newblock In {\em Numerical mathematics and advanced applications---{ENUMATH}
  2017}, volume 126 of {\em Lect. Notes Comput. Sci. Eng.}, pages 725--733.
  Springer, Cham, 2019.

\bibitem{Beirao-da-Veiga.Brezzi.ea:13}
L.~Beir\~{a}o~da Veiga, F.~Brezzi, A.~Cangiani, G.~Manzini, L.~D. Marini, and
  A.~Russo.
\newblock Basic principles of virtual element methods.
\newblock {\em Math. Models Methods Appl. Sci. (M3AS)}, 199(23):199--214, 2013.

\bibitem{Cockburn.Gopalakrishnan.ea:09}
B.~Cockburn, J.~Gopalakrishnan, and R.~Lazarov.
\newblock Unified hybridization of discontinuous {G}alerkin, mixed, and
  continuous {G}alerkin methods for second order elliptic problems.
\newblock {\em SIAM J. Numer. Anal.}, 47(2):1319--1365, 2009.

\bibitem{Di-Pietro.Droniou:18}
D.~A. Di~Pietro and J.~Droniou.
\newblock A third {Strang} lemma for schemes in fully discrete formulation.
\newblock {\em Calcolo}, 55(40), 2018.

\bibitem{Di-Pietro.Ern:12}
D.~A. Di~Pietro and A.~Ern.
\newblock {\em Mathematical aspects of discontinuous {G}alerkin methods},
  volume~69 of {\em Math\'ematiques \& Applications (Berlin)}.
\newblock Springer, Heidelberg, 2012.

\bibitem{Di-Pietro.Ern.ea:14}
D.~A. Di~Pietro, A.~Ern, and S.~Lemaire.
\newblock An arbitrary-order and compact-stencil discretization of diffusion on
  general meshes based on local reconstruction operators.
\newblock {\em Comput. Meth. Appl. Math.}, 14(4):461--472, 2014.

\bibitem{hho-book}
Daniele~Antonio Di~Pietro and J\'er\^ome Droniou.
\newblock {\em The Hybrid High-Order Method for Polytopal Meshes: Design,
  Analysis, and Applications}.
\newblock Modeling, Simulation and Applications. Springer International
  Publishing, 2020.
\newblock To appear.

\bibitem{Droniou.Eymard.ea:10}
J.~Droniou, R.~Eymard, T.~Gallou\"{e}t, and R.~Herbin.
\newblock A unified approach to mimetic finite difference, hybrid finite volume
  and mixed finite volume methods.
\newblock {\em Math. Models Methods Appl. Sci. (M3AS)}, 20(2):1--31, 2010.

\bibitem{Herbin.Hubert:08}
R.~Herbin and F.~Hubert.
\newblock Benchmark on discretization schemes for anisotropic diffusion
  problems on general grids.
\newblock In R.~Eymard and J.-M. H\'{e}rard, editors, {\em Finite Volumes for
  Complex Applications V}, pages 659--692. John Wiley \& Sons, 2008.

\bibitem{Wang.Ye:13}
Junping Wang and Xiu Ye.
\newblock A weak {G}alerkin finite element method for second-order elliptic
  problems.
\newblock {\em J. Comput. Appl. Math.}, 241:103--115, 2013.

\bibitem{W19}
Steffen Wei\ss{}er.
\newblock Anisotropic polygonal and polyhedral discretizations in finite
  element analysis.
\newblock {\em ESAIM Math. Model. Numer. Anal.}, 53(2):475--501, 2019.

\end{thebibliography}
\end{document}